\newtheorem{thm}{Theorem}[section]
\newtheorem{lem}[thm]{Lemma}
\newtheorem{rem}[thm]{Remark}
\newtheorem{prop}[thm]{Proposition}
\newtheorem{cor}[thm]{Corollary}
\newtheorem{example}[thm]{Example}
\newdefinition{defn}{Definition}[section]
\numberwithin{equation}{section}
\begin{document}
\begin{frontmatter}
	\title{Optimal dual frames and dual pairs for probability modelled  erasures using weighted average of  operator norm  and spectral radius}
\author{S. Arati}
\ead{aratishashi@iisertvm.ac.in}
\author{P. Devaraj}
\ead{devarajp@iisertvm.ac.in}
\author{Shankhadeep Mondal}
\ead{shankhadeep16@iisertvm.ac.in}

\address{School of Mathematics, Indian Institute of Science Education and Research Thiruvananthapuram, Maruthamala P.O, Vithura, Thiruvananthapuram-695551.}

\begin{abstract}
	The  prime focus of this paper is the study of optimal duals of  a given finite frame as well as   optimal dual pairs,  in the context of probability modelled erasures of frame coefficients. We characterize optimal dual frames (and dual pairs) which, among all dual frames (and dual pairs), minimize the maximum measure of the error operator obtained while considering all possible locations of probabilistic  erasures of frame coefficients in the reconstruction with respect to each dual frame(dual pair). For  a given weight number sequence associated with the probabilities, the measure of the  probabilistic error operator is taken to be the weighted average of the operator norm and the spectral radius. Using this as an optimality measure, the existence and uniqueness of  optimal dual frames (and optimal dual pairs) and their topological properties  are studied. Also, their relations with probabilistic optimal dual frames as well as dual pairs in other contexts, such as those obtained using operator norm and spectral radius as the measure of the error operator, are analyzed.
		
\end{abstract}

	\begin{keyword}
   frames, operator norm, optimal dual frames, probability modelled erasure, spectral radius
	\MSC[2020] 42C15, 47B02, 94A12
\end{keyword}

\end{frontmatter}

\section{Introduction}
Frames, which were first introduced by Duffin and Schaeffer \cite{duff}, is a generalization of the concept of bases in a Hilbert space. Though the closed linear span of a frame is the whole Hilbert space $\mathcal{H}$, the elements of $\mathcal{H}$ can have multiple and stable representations using the frame elements. Due to this flexibility, reconstruction is possible even if a few frame coefficients are missing. Exploiting this feature of redundancy, frames have been applied in various fields such as sampling and reconstruction, signal processing, communication theory and coding theory. When data is transmitted using frame coefficients, some of them could get corrupted or lost and that introduces an error in the reconstruction, which employs the concept of dual frames. For  a given measure of the error operator, finding an appropriate dual pair of finite frames (or dual frame of a given frame) that minimizes the reconstruction error is a deep rooted problem in frame theory.

\par
One of the initial works on an erasure problem is by Goyal et al. \cite{goya}, wherein they consider a quantization model having additive noise and show that in the case of one erasure, the mean-squared error is minimized by a uniform tight frame. In \cite{casa2}, these equal-norm tight frames have been studied in detail and also particularly in the context of their robustness to erasures. In \cite{holm, bodm1}, the problem of finding a Parseval frame that minimizes the error operator, with respect to the operator norm, over all possible pairs of Parseval frames and their canonical dual frames has been analyzed. For a given frame, the study of dual frames that are optimal for one erasure with respect to the operator norm has been carried out in \cite{jerr,jins}. Certain necessary as well as sufficient conditions for a canonical dual frame to be a unique optimal dual have also been discussed. Taking the spectral radius as the measure of the error operator, the optimal dual frame for a given frame in the case of one erasure has been fully characterized in \cite{sali}. This problem for the case of two erasures has been analysed in \cite{peh, dev}. A similar problem with the measure of the error operator being numerical radius is analysed in \cite{arab} and that  with the average of its operator norm and its numerical radius is studied in \cite{deep}.
\par
In the real world applications, the error patterns are not deterministic, which has led to the consideration of a probability modelled erasure problem. For instance, given a probability distribution for the error locations in the frame expansion, the associated weight number sequence was introduced in \cite{leng3}, using which the error operator is formulated and necessary and sufficient conditions for optimality of  dual frames for a given frame, with respect to the operator norm, is obtained. Furthermore, another set of sufficient conditions, with less complexity, has been given in \cite{li1}.   In the context of finding an optimal dual pair among all possible pairs of Parseval frames and their canonical dual frames, instead of considering the optimal dual frame for a given frame, has been considered in \cite{leng,li}. Recently, the problem of determining  an optimal dual pair among all dual pairs as well as finding an optimal dual frame of a given frame has been analysed in \cite{adm}.
\par
 In this paper, by taking the measure of the probabilistic error operator  to be the weighted  average of the operator norm and the spectral radius, we analyse the erasure problem in the following two broad contexts:
	\begin{itemize}
	\item  The existence and characterization of dual pairs which minimize, among all dual pair of frames, the maximum of the probabilistic error operator's measure obtained by taking all possible combinations of erasure location a fixed length.
	\item The existence and characterization of  dual frames of a given frame, among all dual  frames, which minimize the maximum of  probabilistic error operator's measure by taking all  possible combinations of erasure  locations of a given length.
	\end{itemize}

 For each dual frame  of the given frame and a weight number sequence associated with the probability of erasures, the maximum probabilistic error across various possible locations of fixed length is considered. A dual frame, among all its duals, that minimizes this maximum error is said to be a probabilistic optimal dual frame. In a similar way, an probabilistic optimal dual pair is also defined. The analysis for the deterministic cases of the above stated problems can be obtained from our results,  by setting appropriate values for the weight number sequences.
 After providing the necessary background on frames and notations in connection with probabilistic optimal duality in each of the contexts of dual pair and given frame with respect to  operator norm and spectral radius in Section 2, we introduce the notion of probabilistic  spectral-operator-averaged optimal dual pair and analyze their existence and characterization  in Section 3. The concept of  probabilistic spectral-operator-averaged optimal dual of a given frame is given in Section 4. Necessary as well as sufficient conditions for uncountably many optimal dual frames to exist, for the canonical dual frame to be an optimal dual frame and for it to be the only one are discussed. In Section 5, we investigate the connection between optimality attained under various measures of the error operator, namely the operator norm, the spectral radius and their weighted average. We also show that the canonical dual frame need not be optimal in any of the above contexts, in general. Some topological properties of the collection of the probabilistic spectral-operator-averaged optimal dual frames are studied in Section 6.

\section{Preliminaries on probabilistic erasures }

Let $\mathcal{H}$ denote an $n-$dimensional (real or complex) Hilbert space. A finite sequence of elements $F= \{f_i\}_{i=1}^N$ in $\mathcal{H}$ is said to be  a \textit{frame} for $\mathcal{H}$ if there exist positive  constants $A,B $ such that
\begin{align}
A \left\| f \right\|^2\leq \sum_{i=1}^N\big|\langle f,f_i\rangle\big|^2 \leq B\left\| f \right\|^2,   \forall f\in \mathcal{H}.
\label{frameineq}
\end{align}
The constants $A$ and $B$ appearing inequalities \eqref{frameineq} are called lower and upper frame bounds respectively.  The largest positive constant $A$ satisfying \eqref{frameineq} is called the \textit{optimal lower frame bound}  and the smallest positive constant $B$ satisfying \eqref{frameineq} is called \textit{optimal upper frame bound}. A frame for which both an upper frame bound and a lower frame bound are equal  is called a \textit{tight frame} and if this common  value is one, then the frame is called
 a \textit{Parseval frame}. For a frame $F = \{f_i\}_{i=1}^N$ in $\mathcal{H},$ the linear operator $\Theta_F:  \mathcal{H} \to  {\mathbb{C}^N} $ given by $$\Theta_F(f)= \{\langle f,f_i \rangle\}_{i=1}^N \, ,\,\;f \in \mathcal{H} $$ is known as the \textit{analysis operator} and its adjoint operator   $ \Theta_{F}^*: \mathbb{C}^N \to  \mathcal{H} $  defined by
$$\Theta_{F}^*\left(\{c_i\}_{i=1}^N\right) =  \sum_{i=1}^N {c_i f_i}\, ,\, \; \{c_i\}_{i=1}^N \in \mathbb{C}^{N} $$ is known as the \textit{synthesis operator or preframe operator}.
The operator $S_F : \mathcal{H} \to  \mathcal{H} $  defined by
$$S_{F}f= \Theta_{F}^{*} \Theta_{F} f = \sum_{i=1}^N \langle f,f_i \rangle f_i,\;\, f \in \mathcal{H}$$
is called the \textit{frame operator}, which is a positive, self-adjoint and invertible operator on $ \mathcal{H}.$ For any $f \in \mathcal{H},$ we have the reconstruction formula :
$$f= \sum_{i=1}^N \langle f,S_{F}^{-1}f_i\rangle f_i = \sum_{i=1}^N \langle f,f_i\rangle S_{F}^{-1} f_i .$$
\noindent
A frame $G=\{g_i\}_{i=1}^N$ in $ \mathcal{H}$ is called a \textit{dual frame} of $F$ if every element $f \in   \mathcal{H}$ can be written as
\begin{align}\label{eqn2point1}
	f =  \sum_{i=1}^N \langle f,g_i \rangle f_i .
\end{align}
It may be noted that a sequence $\{g_i\}_{i=1}^N$ satisfies $ f =  \sum\limits_{i=1}^N \langle f,g_i \rangle f_i, \; \forall f \in \mathcal{H}$ if and only if $ f =  \sum\limits_{i=1}^N \langle f,f_i \rangle g_i, \; \forall f \in \mathcal{H}.$ Furthermore, such a sequence $\{g_i\}_{i=1}^N$ will also be a frame.
\noindent
Clearly, $\{S_{F}^{-1}f_i\}_{i=1}^{N}$ is  a dual frame of $F$ and is called the canonical or standard dual frame. If $F$ is a basis, then the canonical dual frame is the only dual frame for it, whereas for a frame which is not a basis, there exist infinitely many dual frames $G$ of $F$ in $ \mathcal{H}.$ A sequence $G= \{g_i\}_{i=1}^N$ is a dual frame of $F$ if and only if it is of the form $G=\{S_{F}^{-1}f_i +u_i\}_{i=1}^N,$ where the sequence $\{u_i\}_{i=1}^N$ satisfies
\begin{align}\label{eqn2point2}
	\sum_{i=1}^N \langle f,f_i \rangle u_i = 0 ,\;\;   \forall f\in \mathcal{H}.
\end{align}
It is easy to see that $\sum\limits_{i=1}^N \langle f,f_i \rangle u_i = 0 ,\;\;   \forall f\in \mathcal{H}$ if and only if $\sum\limits_{i=1}^N \langle f,u_i \rangle f_i = 0 ,\;\;   \forall f\in \mathcal{H}.$
A pair $(F,G)$ of frames consisting of a frame   $F = \{f_i\}_{i=1}^N$ and one of its duals  $G= \{g_i\}_{i=1}^N$ having $N$ elements in each is called an $(N,n)$ dual pair for the n-dimensional Hilbert space ${\cal H}.$
 One may refer to \cite{heil,CasaKuty,ole} for more details on frame theory.
\noindent
\par
In practical data transmissions using  frame coefficients for representing the data, certain frame coefficients may be lost with some probabilities.  Let $P = \{p_i \}_{i=1}^N$ be a probability sequence, where $p_i$ denotes the probability of erasure of the $i^{th}$ frame coefficient $\langle f, f_i \rangle$ in \eqref{eqn2point1}, for $i=1,2,\ldots,N$ and  satisfies
\begin{equation} \label{eqn2.1}
	0 \leq p_i < 1 ,\;\forall \; i=1,2,\ldots,N\; \text{and}\, \displaystyle{\sum_{i=1}^N p_i = 1}.
\end{equation}
Corresponding to each probability distribution $\{p_i\}_{i=1}^N,$
the weight number sequence $\{q_i\}_{i=1}^N$  is defined \cite{leng} as follows:\\
\begin{align}\label{eqn4expressionqi}
	q_i = \displaystyle{{\frac{1}{1 - p_i}\cdot \frac{N-1}{n}}}, \; 1\le i\le N.
\end{align}
It can be easily seen that for $N \geq 2,$ we have $q_i > 0$ and for $N>n,$ we get  $1 \leq q_i < \infty$ for all $i$.
Furthermore, $\sum\limits_{i=1}^N \frac{1}{q_i} = n.$  If $m$ coefficients  have been erased, then the associated error operator is defined by
\begin{align} \label{eqn2point3}
	E_{\Lambda,F,G}\;f:= \Theta_{G}^*D_{P}\Theta_{F} f=\sum_{i\in\Lambda}  q_i \langle f,f_i \rangle g_i\, , \;\, f\in \mathcal{H},
\end{align}
where $\Lambda \subset \{ 1,2,\dots,N \}$ is the set of indices corresponding to the $m$ erased coefficients, $D_p$ is an $N\times N $ diagonal matrix with  diagonal elements $ q_i$ for $i\in \Lambda$ and 0 otherwise.
\subsection{Probabilistic optimal  dual pairs}

For a dual pair $(F, G),$ let
\begin{align*}
\mathcal{O}_{P}^{(1)}(F,G) := \max \bigg\{ \| E_{\Lambda,F,G} \| : |\Lambda|=1 \bigg\}  \mbox{ and } \\
 r_P^{(1)}(F,G):= \max \left\{\; \rho( E_{\Lambda,F,G}) : |\Lambda|=1 \right\},
  \end{align*}
  where $\| E_{\Lambda,F,G} \|$ and $\rho( E_{\Lambda,F,G})$ are the operator norm and the spectral radius of $E_{\Lambda,F,G},$ and the maximum is taken over all subsets $\Lambda$ of $\{1,2,\ldots,N\}$ with cardinality $1.$
We define
$$ \mathcal{O}_P^{(1)} := \inf \bigg\{ \mathcal{O}_P^{(1)}(F,G) : (F,G) \text{ is an (N,n) dual pair } \bigg\}, $$ and
$$ r_P^{(1)} := \inf \bigg\{ r_P^{(1)}(F,G) : (F,G) \text{ is an (N,n) dual pair } \bigg\}. $$
A dual pair $(F',G')$ is  called a $1$-erasure probabilistic optimal dual (in short $POD$ pair) if
$\mathcal{O}_P^{(1)}(F',G') = \mathcal{O}_P^{(1)}, $
and a dual pair $(F',G')$ is  called an $1$-erasure probabilistic spectrally optimal dual pair (in short $PSOD$ pair) if
$ r_P^{(1)}(F',G') =    r_P^{(1)}.$

\subsection{Probabilistic optimal  dual of a given frame}
A dual frame $G'$ of $F$ is called a $1-$erasure probabilistic optimal dual frame (in short $POD$) of $F$ if
$$\mathcal{O}_P^{(1)}(F,G') = \inf \left\{ \mathcal{O}_P^{(1)}(F,G)  : G \;\text{is a dual of F} \right\} .$$

 A dual frame $G'$ of $F$ is called a $1-$erasure probabilistic spectrally optimal dual frame (in short $PSOD$) of $F$ if
$$ r_P^{(1)}(F,G') =   \inf \left\{ r_P^{(1)}(F,G) : G \;\text{is a dual of } F \right\} .$$
The set of all  $1-$erasure $PSODs$ of $F$ shall be denoted by $PSOD^{(1)}(F).$

\section{ Analysis of $PA_{SO}OD$  pairs}
In this section, we introduce a new probabilistic optimality measure, which uses the weighted average of the operator norm and the spectral radius of the probabilistic error operator. Using this optimality  measure,  we analyse the existence and characterization of a optimal dual pairs.
In our study, we can take the weight number sequence to be any sequence $\{q_i\}_{i=1}^N$ of real numbers satisfying the following conditions:
\begin{enumerate}
\item  $q_i \ge 1,$  $1\le i\le N,$
 \item $\sum\limits_{i=1}^N \frac{1}{q_i} = n.$
\end{enumerate}

First, we shall define the concept of probabilistic spectral-operator-averaged optimal dual pairs. Let $(F,G) $ be an $(N,n)$ dual pair for an $n-$dimensional Hilbert space $\mathcal{H}$. Let $E_{\Lambda,F,G}$ be  the error operator, as defined in \eqref{eqn2point3}, associated with $m$ erasures. For $ 0 \leq \lambda \leq 1,$ we shall denote the corresponding maximum probabilistic spectral-operator-averaged error by $ \mathcal{A}_{\lambda P}^{(m)}(F,G).$  In other words,
\begin{align*} \label{eqn3point2}
	\mathcal{A}_{\lambda P}^{(m)}(F,G) = \max \bigg\{ \lambda \rho{( E_{\Lambda,F,G})} + (1- \lambda)\left\|  E_{\Lambda,F,G}  \right\|: | \Lambda | = m \bigg\},
\end{align*}
where the maximum is taken over all subsets $\Lambda$ of $\{1,2,\ldots,N\}$ with cardinality $m,\,\rho(.)$ denotes the spectral radius and  $\left\| .  \right\| $ denotes the operator norm.
Now, we define
\begin{align*}
	\mathcal{A}_{\lambda P}^{(1)} := \inf \left\{ \mathcal{A}_{\lambda P}^{(1)}(F,G) : (F,G) \, \text{is  an $(N,n)$ dual pair}  \right\}
\end{align*}
A  dual pair $(F,G)$ is called a $1-$erasure Probabilistic Spectral-Operator Averaged Optimal Dual pair (in short $PA_{SO}OD$ pair) if $\mathcal{A}_{\lambda P}^{(1)}(F,G)  = \mathcal{A}_{\lambda P}^{(1)}. $
The set of all  $1$-erasure  $PA_{SO}OD$ pair for  $\mathcal{H}$  is denoted by
$\displaystyle{ \zeta_{\lambda P}^{(1)}}.$
 The following proposition gives an equivalent expression for $\mathcal{A}_{\lambda  P}^{(1)}(F,G),$ which we shall make use of in the sequel of the paper.

\begin{prop} \label{prop3point1}
	Let  $(F, G)$ be an $(N,n)$ dual pair for ${\cal H}$ and let $\{q_i\}_{i=1}^N $ be  any sequence of positive real numbers. Then, the following holds:
	$$ \mathcal{A}_{\lambda P}^{(1)}(F,G) = \max_{1 \leq i \leq N} \;\; q_i\bigg( \lambda | \langle f_i,g_i \rangle | + (1- \lambda)\|f_i\|\; \|g_i\|\bigg),\, 0\le \lambda\le 1. $$
	
\end{prop}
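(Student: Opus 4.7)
The plan is to unpack the definition of the error operator for a single erasure and then use the fact that the resulting operator is rank one, for which both the operator norm and the spectral radius admit very clean closed forms.

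First, I would fix $\Lambda=\{i\}$ in \eqref{eqn2point3}. The definition collapses to
\begin{align*}
E_{\{i\},F,G}\,f \;=\; q_i\,\langle f,f_i\rangle\,g_i, \qquad f\in\mathcal{H},
\end{align*}
which is a rank-one operator of the form $f\mapsto \langle f,u\rangle v$ with $u=q_i f_i$ and $v=g_i$ (or equivalently with the scalar $q_i$ pulled outside).

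Next, for such a rank-one operator $T=\langle\cdot,u\rangle v$ I would recall the two standard identities $\|T\|=\|u\|\,\|v\|$ and $\rho(T)=|\langle v,u\rangle|$. The first follows from Cauchy--Schwarz, with equality on $u/\|u\|$, and the second from the observation that $T$ has at most one nonzero eigenvalue, namely $\langle v,u\rangle$ with eigenvector $v$ (when nonzero). Applying these to $E_{\{i\},F,G}$ gives
\begin{align*}
\|E_{\{i\},F,G}\| \;=\; q_i\,\|f_i\|\,\|g_i\|, \qquad \rho\bigl(E_{\{i\},F,G}\bigr) \;=\; q_i\,|\langle f_i,g_i\rangle|.
\end{align*}

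Combining, the quantity inside the $\max$ in the definition of $\mathcal{A}_{\lambda P}^{(1)}(F,G)$ equals $q_i\bigl(\lambda|\langle f_i,g_i\rangle|+(1-\lambda)\|f_i\|\,\|g_i\|\bigr)$, and taking the maximum over $|\Lambda|=1$ (i.e.\ over $1\le i\le N$) yields the claimed formula. I do not anticipate any genuine obstacle; the only point that requires a word of justification is the spectral radius computation for the rank-one operator, and that is immediate from the characteristic polynomial of $T$ restricted to $\mathrm{span}\{v\}\oplus\ker T$ (or from $\mathrm{tr}(T)=\langle v,u\rangle$ being the unique nonzero eigenvalue). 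Note also that the statement is purely about a single dual pair, so no infimum manipulation is needed and the hypotheses on $\{q_i\}$ beyond positivity play no role here.
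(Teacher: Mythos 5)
Your proposal is correct and follows essentially the same route as the paper: identify $E_{\{i\},F,G}$ as the rank-one operator $f\mapsto q_i\langle f,f_i\rangle g_i$, compute its spectral radius from its only nonzero eigenvalue $q_i\langle g_i,f_i\rangle$ and its operator norm as $q_i\|f_i\|\,\|g_i\|$, then take the maximum over $i$. No gaps.
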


\begin{proof}
	In the case of 1-erasure, if the erasure occurs in the $i^{th}$ position, then the error operator is $E_{\Lambda,F,G}\,f = q_i \langle f,f_i \rangle g_i,\;f \in \mathcal{H}.$ It can be verified that the eigenvalues of $E_{\Lambda,F,G}$ are $0 \; \text{and} \;q_i \langle g_i,f_i \rangle.$ Therefore, $\rho(E_{\Lambda,F,G}) = |q_i \langle f_i,g_i \rangle |.$ Further,
	\begin{eqnarray} \label{equationON5}
		\| E_{\Lambda,F,G}\| = \sup\limits_{\|f\|=1} \left\| E_{\Lambda,F,G}\,f   \right\|=  \sup\limits_{\|f\|=1} \left\|  q_i \langle f,f_i \rangle g_i  \right\| = q_i\|f_i\|\; \|g_i\|.
	\end{eqnarray}
	Hence, $\mathcal{A}_{\lambda P}^{(1)}(F,G) =  \max\limits_{1 \leq i \leq N} \;\; q_i\bigg( \lambda | \langle f_i,g_i \rangle | + (1- \lambda)\|f_i\|\; \|g_i\|\bigg) ,$ thereby proving the proposition.
\end{proof}

In order to analyse the existence of optimal dual pairs, we need the concept of probabilistic equal norm Parseval frames and existence of  such frames.

\begin{defn}
 A  frame $\{f_i\}_{i=1}^N $ is said to be a \textit{probabilistic equal norm  frame} if there exists a constant $c$ such that
	$\sqrt{q_i} \|f_i\| = c$  for all $1 \leq i \leq N.$ A  frame is called a \textit{probabilistic equal norm Parseval frame} if it is both a Parseval frame and a probabilistic equal norm frame.
\end{defn}
It may be  noted that if $\{f_i\}_{i=1}^N $ is a probabilistic equal norm Parseval frame, then the constant  appearing in the above definition turns out to be 1.
\begin{prop}\label{cor4point1}
For a given $n-$ dimensional Hilbert space $\mathcal{H} $ and a weight number sequence $\{q_i\}_{i=1}^N ,$  there  exists a probabilistic equal norm  Parseval frame $F = \{f_i\}_{i=1}^N$ for  $\mathcal{H} .$
\end{prop}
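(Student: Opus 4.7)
\medskip
\noindent\emph{Proof plan.}

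My plan is to reduce the existence question to the Schur--Horn theorem. Setting $a_i := 1/q_i$, the two standing hypotheses on $\{q_i\}$ translate to $0 < a_i \le 1$ and $\sum_{i=1}^N a_i = n$. Since any Parseval frame $\{f_i\}_{i=1}^N$ for $\mathcal{H}$ with $\|f_i\|^2 = a_i$ automatically satisfies $\sqrt{q_i}\,\|f_i\| = 1$ for every $i$, it suffices to construct a Parseval frame for $\mathcal{H}$ whose squared norms are precisely $a_1,\dots,a_N$.

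To do so I would appeal to Naimark's dilation: identify $\mathcal{H}$ with the range of an orthogonal projection $P$ of rank $n$ on $\mathbb{C}^N$, and let $\{e_i\}_{i=1}^N$ denote the standard basis of $\mathbb{C}^N$. Then $f_i := Pe_i$ is automatically a Parseval frame for $\mathrm{range}(P)\cong\mathcal{H}$, and $\|f_i\|^2 = \langle Pe_i, e_i\rangle = P_{ii}$. Hence the task reduces to exhibiting a rank-$n$ orthogonal projection on $\mathbb{C}^N$ whose standard-basis diagonal is $(a_1,\dots,a_N)$. By the Schur--Horn theorem such a self-adjoint matrix, with eigenvalue vector $\lambda = (\underbrace{1,\dots,1}_{n},\underbrace{0,\dots,0}_{N-n})$, exists if and only if $(a_1,\dots,a_N)$ is majorized by $\lambda$.

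The majorization check is immediate. Sorting $a_{(1)} \ge a_{(2)} \ge \cdots \ge a_{(N)}$ in decreasing order, for $k \le n$ one has $\sum_{i=1}^k a_{(i)} \le k$ because every summand is at most $1$, and for $k > n$ one has $\sum_{i=1}^k a_{(i)} \le \sum_{i=1}^N a_i = n$; these combine into $\sum_{i=1}^k a_{(i)} \le \min(k,n) = \sum_{i=1}^k \lambda_i$, with equality when $k = N$. This is precisely what is needed.

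The one place where real work is required is in invoking Schur--Horn, and this is where I expect the main obstacle to lie if a self-contained argument is desired. In that case I would proceed by induction on $N$: whenever some $a_i$ equals $1$, peel off the corresponding $e_i$ (taking $f_i$ to be a unit vector and reducing to an $(n-1, N-1)$ problem), and otherwise select two indices $i, j$ with $a_i, a_j \in (0,1)$ and apply a real $2 \times 2$ rotation in the $(e_i, e_j)$-plane within the projection being constructed, chosen so as to push one of the two diagonal entries to $0$ or $1$, thereby strictly decreasing the number of unresolved coordinates. Either route yields the required projection $P$, and then $F = \{Pe_i\}_{i=1}^N$ is the desired probabilistic equal norm Parseval frame.
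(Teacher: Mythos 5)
Your proposal is correct and is essentially the paper's argument: both reduce the problem to the same majorization inequalities $\sum_{i=1}^k a_{(i)} \le \min(k,n)$ (with equality at $k=N$) and then invoke a Schur--Horn-type existence result. The only difference is packaging --- the paper directly cites the frame-theoretic version (Casazza--Leon: existence of a frame with prescribed frame operator $I$ and prescribed norms $1/\sqrt{q_i}$, after sorting the $q_i$), whereas you route through Naimark dilation and the classical matrix Schur--Horn theorem, which is an equivalent formulation with an identical majorization check.
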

\begin{proof}
 Let $S$ denote the $n \times n$ identity matrix. Then, the  eigenvalues of $S$ are $\lambda_i =1, \;\; 1\leq i \leq n.$ Now, we permute the weight number sequence $\{q_i\}_{i=1}^N$ and obtain a new weight number sequence $\{q'_i\}_{i=1}^N$ such that $q'_1 \leq q'_2 \leq \dots \leq q'_N. $
Let $ a'_i = \frac{1}{\sqrt{q'_i}}, \, 1 \leq i \leq N. $
	Then, for $1 \leq k \leq n,$  $\displaystyle{\sum_{i=1}^k a_{i}'^2  \leq k = \sum_{i=1}^k \lambda_i }$ and $\displaystyle{\sum_{i=1}^N a_{i}'^2 = \sum_{i=1}^N \frac{1}{q'_i} = n =  \sum_{i=1}^n \lambda_i.} $
	Therefore, by \cite[Theorem 2.1]{cass2}, there exists a frame $\{f'_i \}_{i=1}^N$ for $\mathcal{H}$  with $S$ as its frame operator and $\|f'_i \| = \frac{1}{\sqrt{q'_i}}.$ Thus, $\{f'_i \}_{i=1}^N$ is a probabilistic equal norm Parseval frame. Applying the inverse permutation, we obtain a probabilistic equal norm Parseval frame $F = \{f_i \}_{i=1}^N$ with $\|f_i \| =  \frac{1}{\sqrt{q_i}}, \, 1 \leq i \leq N.$
\end{proof}

\begin{thm}\label{thm6point4} Let   $\{q_i\}_{i=1}^N $ be  a weight number sequence. Then, the following hold:
\begin{enumerate}
\item  $ \mathcal{A}_{\lambda P}^{(1)}=1$ for  every $0 \leq \lambda \leq 1.$
\item For $0 < \lambda < 1,$  $(F,G) \in \zeta_{\lambda P}^{(1)} $ if and only if $ \langle f_i,g_i \rangle = \|f_i\|\;\|g_i\| = \frac{1}{q_i},\, 1 \leq i \leq N.$
\item For $\lambda=0, $  $(F,G) \in \zeta_{\lambda P}^{(1)} $ if and only if $  \|f_i\|\;\|g_i\| = \frac{1}{q_i},\, 1 \leq i \leq N.$
\item For $\lambda=1, $  $(F,G) \in \zeta_{\lambda P}^{(1)} $ if and only if $ \langle f_i,g_i \rangle  = \frac{1}{q_i}, \, 1 \leq i \leq N.$
    \end{enumerate}
\end{thm}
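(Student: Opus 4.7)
The plan is to combine Proposition~\ref{prop3point1} with two identities that hold for every $(N,n)$ dual pair $(F,G)$: the trace identity $\sum_{i=1}^N \langle f_i,g_i\rangle = n$, which follows from $I_{\mathcal{H}} = \sum_i f_i\otimes g_i$ together with $\mathrm{tr}(f_i\otimes g_i)=\langle f_i,g_i\rangle$, and the standing hypothesis $\sum_{i=1}^N 1/q_i = n$. Cauchy--Schwarz and the triangle inequality then give
\[
\sum_{i=1}^N \|f_i\|\,\|g_i\| \;\ge\; \sum_{i=1}^N |\langle f_i,g_i\rangle| \;\ge\; \Bigl|\sum_{i=1}^N \langle f_i,g_i\rangle\Bigr| \;=\; n.
\]
Writing $M := \mathcal{A}_{\lambda P}^{(1)}(F,G)$, Proposition~\ref{prop3point1} gives $\lambda|\langle f_i,g_i\rangle| + (1-\lambda)\|f_i\|\,\|g_i\| \le M/q_i$ for each $i$. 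Summing and using $\sum 1/q_i = n$,
\[
n \;\le\; \lambda\sum_i |\langle f_i,g_i\rangle| + (1-\lambda)\sum_i \|f_i\|\,\|g_i\| \;\le\; Mn,
\]
so $M \ge 1$ for every dual pair, whence $\mathcal{A}_{\lambda P}^{(1)} \ge 1$.

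For the matching upper bound I would invoke Proposition~\ref{cor4point1} to produce a probabilistic equal norm Parseval frame $F=\{f_i\}_{i=1}^N$ with $\|f_i\|=1/\sqrt{q_i}$ and take $G=F$ (the canonical dual of a Parseval frame). Then $\langle f_i,f_i\rangle = \|f_i\|\,\|f_i\| = 1/q_i$, so every entry of the max in Proposition~\ref{prop3point1} equals $1$, giving $\mathcal{A}_{\lambda P}^{(1)}(F,F)=1$. Together with the lower bound this proves (1).

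For (2)--(4), $(F,G)\in \zeta_{\lambda P}^{(1)}$ iff $M=1$, which forces equality throughout the chain above: termwise, $\lambda|\langle f_i,g_i\rangle| + (1-\lambda)\|f_i\|\,\|g_i\| = 1/q_i$ for every $i$, and in summed form,
\[
\lambda\Bigl(\sum_i |\langle f_i,g_i\rangle| - n\Bigr) + (1-\lambda)\Bigl(\sum_i \|f_i\|\,\|g_i\| - n\Bigr) = 0,
\]
with both bracketed quantities nonnegative. For $0<\lambda<1$ both vanish: the triangle equality $\sum_i|\langle f_i,g_i\rangle| = |\sum_i \langle f_i,g_i\rangle|$, with sum $n>0$, forces every $\langle f_i,g_i\rangle$ to be a nonnegative real, and $\sum_i\|f_i\|\,\|g_i\| = \sum_i|\langle f_i,g_i\rangle|$ combined with the termwise Cauchy--Schwarz bound forces $\|f_i\|\,\|g_i\| = \langle f_i,g_i\rangle$; substituting into the termwise equation yields $\langle f_i,g_i\rangle = 1/q_i$. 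For $\lambda=0$ the termwise equation is simply $\|f_i\|\,\|g_i\| = 1/q_i$; for $\lambda=1$ it reads $|\langle f_i,g_i\rangle| = 1/q_i$, and the phase argument just described upgrades this to $\langle f_i,g_i\rangle = 1/q_i$. Each converse is an immediate substitution into the formula of Proposition~\ref{prop3point1}.

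The principal technical obstacle is the equality analysis for complex inner products: one must recognise that $\sum_i |\langle f_i,g_i\rangle| = |\sum_i \langle f_i,g_i\rangle|$ forces all inner products to share a common phase, and since their sum is the positive real $n$, this phase must be zero. This ``phase promotion'' is exactly what upgrades the absolute-value statement to the signed statement in parts (2) and (4), and is precisely what is \emph{absent} in (3), consistent with the latter being phrased only in terms of $\|f_i\|\,\|g_i\|$.
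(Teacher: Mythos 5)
Your proposal is correct and follows essentially the same route as the paper: both rest on the trace identity $\sum_{i=1}^N\langle f_i,g_i\rangle=n$, the weight identity $\sum_{i=1}^N 1/q_i=n$, Cauchy--Schwarz, the probabilistic equal norm Parseval frame from Proposition~\ref{cor4point1} for attainment, and an equality analysis that forces each term to equal $1/q_i$ and promotes $|\langle f_i,g_i\rangle|$ to $\langle f_i,g_i\rangle$ via the triangle-equality/phase argument. The only differences are organizational (you run the lower bound as a direct summed inequality and split the equality case into two nonnegative bracketed sums, where the paper argues stepwise by contradiction), not substantive.
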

\begin{proof}
	1. Let $(F,G)$ be an $(N,n)$ dual pair. Then, by using  Proposition \ref{prop3point1}, we get $$ \mathcal{A}_{\lambda P}^{(1)}(F,G) = \max\limits_{1 \leq i \leq N} \, q_i\bigg\{ \lambda | \langle f_i,g_i \rangle | + (1 - \lambda)\|f_i\|\; \|g_i\|\bigg\}  \geq \max\limits_{1 \leq i \leq N}\, q_i|\langle f_i,g_i \rangle| .$$ It is easy to see  that  $ \mathcal{A}_{\lambda P}^{(1)}(F,G) \ge 1,$ for otherwise $ \mathcal{A}_{\lambda P}^{(1)}(F,G) < 1.$ Then, $q_i |\langle f_i,g_i \rangle|  < 1,\; \forall i.$ This leads to
	$ n = \sum\limits_{1 \leq i \leq N}  \langle f_i,g_i \rangle \leq  \sum\limits_{1 \leq i \leq N} |  \langle f_i,g_i \rangle | < \sum\limits_{1 \leq i \leq N} \frac{1}{q_i} =n,$
	which is not possible. Thus, $ \mathcal{A}_{\lambda P}^{(1)}(F,G) \geq 1$ for every dual pair $(F,G).$  Now, if we take a dual pair $(F,F)$ consisting of  probabilistic equal norm Parseval frame  and its canonical dual, whose existence is obtained  in Proposition \ref{cor4point1}, we get $\mathcal{A}_{\lambda P}^{(1)}(F,F) = \max\limits_{1 \leq i \leq N}q_i\bigg\{\lambda | \langle f_i,f_i \rangle | + (1- \lambda)\|f_i\|\,\|f_i\|\bigg\} = \max\limits_{1 \leq i \leq N} q_i \|f_i\|^2 = 1.$ Therefore, $\mathcal{A}_{\lambda P}^{(1)} = 1.$\\
2.  Let $(F,G) \in \zeta_{\lambda P}^{(1)}.$ Then, $\max\limits_{1 \leq i \leq N} q_i\bigg\{\lambda | \langle f_i,g_i \rangle | + (1- \lambda)\|f_i\|\,\|g_i\|\bigg\} = 1.$ If  $ q_j\bigg\{\lambda | \langle f_j,g_j \rangle | + (1- \lambda)\|f_j\|\,\|g_j\|\bigg\} < 1$ for   $j \in
	\{1,2,\ldots,N\},$ then
	$$n = \sum\limits_{1 \leq i \leq N}  \langle f_i,g_i \rangle \leq  \sum\limits_{1 \leq i \leq N} |  \langle f_i,g_i \rangle | \leq  \sum\limits_{1 \leq i \leq N} \bigg\{\lambda | \langle f_i,g_i \rangle | + (1- \lambda)\|f_i\|\,\|g_i\| \bigg\}    < \sum\limits_{1 \leq i \leq N} \frac{1}{q_i} =n, $$
	which is not possible. Therefore, $ \lambda | \langle f_i,g_i \rangle | + (1- \lambda)\|f_i\|\,\|g_i\| = \frac{1}{q_i}, \; 1 \leq i \leq N.$ Now, we shall prove that $| \langle f_i,g_i \rangle |= \frac{1}{q_i},\;\forall i.$ Obviously, $| \langle f_i,g_i \rangle | > \frac{1}{q_i}$ is not possible for any $i,$ otherwise,
	$$q_i \bigg\{\lambda | \langle f_i,g_i \rangle | + (1- \lambda)\|f_i\|\,\|g_i\| \bigg\}  \geq q_i| \langle f_i,g_i \rangle | >1, $$
	which contradict the fact that  $\lambda | \langle f_i,g_i \rangle | + (1- \lambda)\|f_i\|\,\|g_i\|  = \frac{1}{q_i}.$ Therefore, $| \langle f_i,g_i \rangle | \leq \frac{1}{q_i},\;1 \leq i \leq N.$ If for any $j\in \{1,2,\ldots,N\},$\, $| \langle f_j,g_j \rangle |< \frac{1}{q_j},$ then $n = \sum\limits_{1 \leq i \leq N}  \langle f_i,g_i \rangle \leq  \sum\limits_{1 \leq i \leq N} |  \langle f_i,g_i \rangle | < \sum\limits_{1 \leq i \leq N} \frac{1}{q_i} =n, $ which is not possible. Therefore, $| \langle f_i,g_i \rangle |= \frac{1}{q_i},\;\forall i.$ Also, using the fact that $\lambda| \langle f_i,g_i \rangle | + (1- \lambda) \|f_i\|\,\|g_i\|  = \frac{1}{q_i},\forall i,$ we have $\|f_i\|\,\|g_i\| = \frac{1}{q_i},\,\forall i.$
It now suffices to show that $\langle f_i, g_i \rangle \geq 0,\,\forall\, i.$ Clearly,
	$$\sum\limits_{i=1}^N \langle f_i , g_i \rangle = n = \sum\limits_{i=1}^N\frac{1}{q_i}= \sum\limits_{i=1}^N |\langle f_i , g_i \rangle | .$$
	Let $\langle f_j , g_j \rangle = a_j + ib_j$ for  $1\leq j \leq N,$ where $i = \sqrt{-1}$ and $a_j,b_j \in \mathbb{R}.$ Then, $\sum\limits_{i=1}^N a_i = n = \sum\limits_{i=1}^N \sqrt{a_i^2 +b_i^2,}$ which implies $b_i = 0$ for every $i$ and hence $\sum\limits_{i=1}^N a_i = n = \sum\limits_{i=1}^N |a_i|.$ This shows that  $a_i \geq 0,\, \forall\, i.$ Therefore, $\langle f_i , g_i \rangle = \frac{1}{q_i}$ for $1 \leq i \leq  N.$

	Conversely, suppose  $ \langle f_i,g_i \rangle = \|f_i\|\;\|g_i\| = \dfrac{1}{q_i}, \;1 \leq i \leq N.$ Then, $\lambda | \langle f_i,g_i \rangle | + (1- \lambda)\|f_i\|\,\|g_i\|  = \frac{1}{q_i},\;\forall i $ and hence, $  \mathcal{A}_{\lambda P}^{(1)}(F,G) = 1,$ by Proposition \ref{prop3point1}.  Therefore,  using the statement 1, we have $(F,G) \in \zeta_{\lambda P}^{(1)}.$

The proof of the other statements are similar.
\end{proof}

\begin{cor}Let $0<\lambda\le 1.$ Then, every $1-$erasure $PA_{SO}OD$ pair is  probabilistic $1-$uniform.
\end{cor}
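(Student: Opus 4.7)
The plan is to derive the corollary as an immediate consequence of Theorem \ref{thm6point4}. For $0 < \lambda < 1$, part 2 of that theorem states that any $(F,G) \in \zeta_{\lambda P}^{(1)}$ forces $\langle f_i, g_i \rangle = \|f_i\|\,\|g_i\| = 1/q_i$ for every $i$. For the endpoint $\lambda = 1$, part 4 of the same theorem yields the identity $\langle f_i, g_i \rangle = 1/q_i$ directly. Combining these two ranges, any $PA_{SO}OD$ pair with $0 < \lambda \le 1$ satisfies $q_i \langle f_i, g_i \rangle = 1$ for all $1 \le i \le N$.

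The final step is to recognise that this identity is exactly the defining property of probabilistic $1$-uniformity: the quantity $q_i \langle f_i, g_i \rangle$ is independent of $i$, and the common value is forced to be $1$ by the dual-pair identity $\sum_{i=1}^N \langle f_i, g_i \rangle = n$ combined with the weight constraint $\sum_{i=1}^N 1/q_i = n$. This yields the claim.

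I expect no genuine obstacle here; the corollary is essentially a bookkeeping consequence of what has already been extracted in Theorem \ref{thm6point4}. The only point worth flagging in the write-up is why the endpoint $\lambda = 0$ is excluded from the statement: part 3 of Theorem \ref{thm6point4} only delivers the norm-product equality $\|f_i\|\,\|g_i\| = 1/q_i$ and gives no control over the inner product $\langle f_i, g_i \rangle$ itself, so probabilistic $1$-uniformity can fail in that boundary case. This explains the hypothesis $0 < \lambda \le 1$ rather than $0 \le \lambda \le 1$.
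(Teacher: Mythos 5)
Your proposal is correct and follows exactly the route the paper intends: the paper's proof is the single line ``Follows from the proof of Theorem \ref{thm6point4},'' and you have simply made explicit that parts 2 and 4 of that theorem force $q_i\langle f_i,g_i\rangle=1$ for all $i$ when $0<\lambda\le 1$. Your remark on why $\lambda=0$ must be excluded (part 3 controls only $\|f_i\|\,\|g_i\|$, not the inner product) is a useful clarification that the paper leaves implicit.
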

\begin{proof}
Follows from the proof of Theorem \ref{thm6point4}.
\end{proof}

It is simple to verify that $(F,G)$ is an $(N,n)$ dual pair if and only if $(UF,UG)$ is an $(N,n)$ dual pair for every unitary operator $U$ on $\mathcal{H}.$ The following proposition shows that  an $1-$erasure optimal  pair invariant under the action a unitary operator.
\begin{prop}
	Let $(F,G)$ be an $(N,n)$ dual pair in $\mathcal{H}$ and let $\{q_i\}_{i=1}^N$ be a weight number sequence.  Let  $0 \leq \lambda \leq 1.$ Then, for any unitary operator $U $ of $\mathcal{H},$  $(F,G) \in \zeta_{\lambda P}^{(1)} $ if and only if $(UF,UG) \in \zeta_{\lambda P}^{(1)}.$
\end{prop}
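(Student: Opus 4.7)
The plan is to reduce everything to the formula from Proposition \ref{prop3point1}, which expresses $\mathcal{A}_{\lambda P}^{(1)}(F,G)$ purely in terms of the scalars $\langle f_i,g_i\rangle$, $\|f_i\|$ and $\|g_i\|$. Since a unitary operator preserves inner products and norms, these scalars are invariant under the replacement $(F,G)\mapsto (UF,UG)$, so the optimality measure is invariant as well.

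More concretely, I would first invoke the fact, noted just before the statement, that $(F,G)$ is an $(N,n)$ dual pair if and only if $(UF,UG)$ is an $(N,n)$ dual pair for every unitary $U$ on $\mathcal{H}$; this keeps us inside the correct ambient class. Next, for each $1\le i\le N$, I would record the three identities
\begin{equation*}
\langle Uf_i,Ug_i\rangle=\langle f_i,g_i\rangle,\qquad \|Uf_i\|=\|f_i\|,\qquad \|Ug_i\|=\|g_i\|,
\end{equation*}
which follow from unitarity. Applying Proposition \ref{prop3point1} term by term then gives
\begin{equation*}
\mathcal{A}_{\lambda P}^{(1)}(UF,UG)=\max_{1\le i\le N} q_i\bigl(\lambda|\langle Uf_i,Ug_i\rangle|+(1-\lambda)\|Uf_i\|\,\|Ug_i\|\bigr)=\mathcal{A}_{\lambda P}^{(1)}(F,G).
\end{equation*}

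Finally, since the infimum $\mathcal{A}_{\lambda P}^{(1)}$ is a property of the whole family of $(N,n)$ dual pairs and does not depend on the particular pair, the equality above shows that $\mathcal{A}_{\lambda P}^{(1)}(F,G)=\mathcal{A}_{\lambda P}^{(1)}$ if and only if $\mathcal{A}_{\lambda P}^{(1)}(UF,UG)=\mathcal{A}_{\lambda P}^{(1)}$, which is precisely the desired equivalence $(F,G)\in\zeta_{\lambda P}^{(1)}\Leftrightarrow(UF,UG)\in\zeta_{\lambda P}^{(1)}$. There is no real obstacle here: the argument is a one-line invariance check once Proposition \ref{prop3point1} is available, and the only thing to be careful about is to verify at the outset that $(UF,UG)$ is indeed a dual pair so that it is a legitimate candidate to belong to $\zeta_{\lambda P}^{(1)}$.
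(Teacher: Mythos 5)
Your proposal is correct and follows essentially the same route as the paper: the paper's proof is exactly the one-line computation $\mathcal{A}_{\lambda P}^{(1)}(UF,UG)=\max_{1\le i\le N} q_i\bigl(\lambda|\langle Uf_i,Ug_i\rangle|+(1-\lambda)\|Uf_i\|\,\|Ug_i\|\bigr)=\mathcal{A}_{\lambda P}^{(1)}(F,G)$ via unitary invariance of inner products and norms. Your additional remarks --- checking that $(UF,UG)$ is a dual pair and that the infimum $\mathcal{A}_{\lambda P}^{(1)}$ is independent of the particular pair --- are sound and merely make explicit what the paper leaves implicit.
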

\begin{proof}
	The proof follows from
	\begin{align*}
		\mathcal{A}_{\lambda P}^{(1)}  (UF,UG) &= \max\limits_{1 \leq i \leq N} q_i \bigg\{\lambda | \langle U f_i, U g_i \rangle | + (1- \lambda)\|U f_i\|\,\|U g_i\| \bigg\}  \\&= \max\limits_{1 \leq i \leq N} q_i \bigg\{\lambda | \langle f_i, g_i \rangle | + (1- \lambda)\| f_i\|\,\|g_i\| \bigg\} \\&=\mathcal{A}_{\lambda P}^{(1)} (F,G). \end{align*}
\end{proof}

\begin{thm}\label{thm4point2}
	Let $F = \{f_i\}_{i=1}^N $ be a tight frame for $\mathcal{H}$. Let  $\{q_i\}_{i=1}^N$ be a weight number sequence and  $0 \leq \lambda \leq 1.$  Then, the following are equivalent:
	\begin{enumerate}
		\item [{\em (i)}] $(F,S_{F}^{-1}F)$ is a $1-$erasure $POD$ pair.
		\item [{\em (ii)}] $(F,S_{F}^{-1}F)$ is a $1-$erasure $PSOD$ pair.
		\item [{\em (iii)}] $(F,S_{F}^{-1}F)$ is a $1-$erasure $PA_{SO}OD$ pair.
	\end{enumerate}
\end{thm}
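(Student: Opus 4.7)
The plan is to reduce all three conditions to a single common criterion via Theorem \ref{thm6point4}, exploiting the fact that for a tight frame the canonical dual is essentially a rescaling of the frame itself. Specifically, since $F=\{f_i\}_{i=1}^N$ is tight with some bound $A>0$, the frame operator satisfies $S_F = A\,I_{\mathcal H}$, so that $S_F^{-1}f_i = A^{-1}f_i$ for every $i$. This gives the two elementary identities
\[
\langle f_i,\, S_F^{-1}f_i\rangle \;=\; A^{-1}\|f_i\|^2 \;=\; \|f_i\|\cdot\|S_F^{-1}f_i\|, \qquad 1\le i\le N,
\]
and in particular both quantities are real and non-negative. The second equality is the key observation, since it collapses the distinction between the spectral-radius condition and the operator-norm condition appearing in Theorem \ref{thm6point4}.

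Next I would invoke Theorem \ref{thm6point4} separately in the three relevant cases. Recall from the preliminaries that $\zeta_{0P}^{(1)}$ coincides with the set of $POD$ pairs and $\zeta_{1P}^{(1)}$ with the set of $PSOD$ pairs, so Theorem \ref{thm6point4} already characterizes all three notions. Applying items (2), (3), (4) of that theorem to $(F, S_F^{-1}F)$ and substituting the identities above, each of (i), (ii), (iii) becomes equivalent to the single numerical condition
\[
A^{-1}\|f_i\|^2 \;=\; \frac{1}{q_i}, \qquad 1\le i\le N, \qquad \text{i.e.}\qquad q_i\|f_i\|^2 = A \text{ for all } i.
\]
Since this one condition characterizes each of the three properties independently, the equivalence (i) $\Leftrightarrow$ (ii) $\Leftrightarrow$ (iii) follows immediately.

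There is no real obstacle in this proof; the essential content is the observation that the canonical dual of a tight frame automatically achieves the Cauchy-Schwarz equality $|\langle f_i, g_i\rangle| = \|f_i\|\|g_i\|$, which is exactly the gap between the spectral-radius and operator-norm characterizations in Theorem \ref{thm6point4}. The write-up itself should be short: state the tightness reduction, record the two identities, and invoke the three cases of Theorem \ref{thm6point4}.
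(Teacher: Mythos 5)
Your proof is correct. The one claim you should verify explicitly rather than ``recall from the preliminaries'' is that $\zeta_{0P}^{(1)}$ is exactly the set of $POD$ pairs and $\zeta_{1P}^{(1)}$ exactly the set of $PSOD$ pairs; this is not stated in the paper but follows in one line from Proposition \ref{prop3point1}, since $\mathcal{A}_{0P}^{(1)}(F,G)=\mathcal{O}_{P}^{(1)}(F,G)$ and $\mathcal{A}_{1P}^{(1)}(F,G)=r_{P}^{(1)}(F,G)$ for every dual pair. With that noted, your route is sound: the identity $\langle f_i,S_F^{-1}f_i\rangle=\|f_i\|\,\|S_F^{-1}f_i\|=A^{-1}\|f_i\|^2$ collapses all three cases of Theorem \ref{thm6point4} to the single condition $q_i\|f_i\|^2=A$ for all $i$. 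The paper reaches the same pivot condition but by a cyclic chain $(i)\Rightarrow(ii)\Rightarrow(iii)\Rightarrow(i)$, importing the characterizations of $POD$ and $PSOD$ pairs from Theorems 3.4 and 3.8 of the cited reference [adm] and using Theorem \ref{thm6point4} only for the $PA_{SO}OD$ step. Your version is more self-contained, deriving everything from Theorem \ref{thm6point4} by specializing $\lambda$ to $0$ and $1$; the paper's version avoids having to re-justify those specializations by leaning on previously published results. The mathematical content is the same either way.
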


\begin{proof}
\noindent $(i) \Rightarrow (ii): $ \;$(F,S_{F}^{-1}F)$ is a $1-$erasure $POD$ pair gives $\frac{q_i}{A} \|f_i\|^2 = 1$ for every $i,$ by \cite[Theorem 3.4]{adm}. Therefore,  $r_{P}^{(1)}(F,S_{F}^{-1}F) = \max\limits_{1 \leq i \leq N} \frac{q_i}{A} \|f_i\|^2 = 1 $ and hence  $(F,S_{F}^{-1}F)$ is a $1-$erasure $PSOD$ pair by  \cite[Theorem 3.8]{adm}.

\noindent $(ii) \Rightarrow (iii) $:  \;$(F,S_{F}^{-1}F)$ is a $1-$erasure $PSOD$ pair gives $\frac{q_i}{A} \|f_i\|^2 = 1$ for every $i$, by  \cite[Theorem 3.8]{adm}. Therefore, $\mathcal{A}_{\lambda P}^{(1)} (F,S_{F}^{-1}F) = \max\limits_{1 \leq i \leq N} \frac{q_i}{A} \|f_i\|^2 = 1. $ Hence, $(F,S_{F}^{-1}F)$ is a $1-$erasure $PA_{SO}OD$ pair, by Theorem \ref{thm6point4}.
	
\noindent $(iii) \Rightarrow (i) $: The proof is similar to that of  $(ii) \Rightarrow (iii) $.
	
\end{proof}

\begin{rem}
If  $q_i=\frac{N}{n},$  $\forall i,$ in the weight number sequence  $\{q_i\}_{i=1}^N,$ then all the results in this section correspond to the usual average case.
\end{rem}

\section{Analysis of $PA_{SO}OD$ of a given frame}
In this section, we analyse optimal duals  of a given frame using the weighted average of the operator norm and spectral radius of the error operator as the measure of optimality. We can take the weight number sequence  $\{q_i\}_{i=1}^N $ to be any sequence of positive real numbers. We define
\begin{align*}
	\mathcal{A}_{\lambda P}^{(1)}(F) :=  inf \left\{ \mathcal{A}_{\lambda P}^{(1)}(F,G) : \text{$G$ is a dual of $F$} \right\}.
\end{align*}
A dual frame $G'$ of $F$ is called $1-$erasure  $PA_{SO}OD$ frame of $F$ if $\mathcal{A}_{\lambda P}^{(1)}(F,G') = \mathcal{A}_{\lambda P}^{(1)}(F) .$
 We denote the set of all $1-$erasure $PA_{SO}OD$s of $F$ by $\Delta_{F}^{(1)},$ for the sake of convenience. When $F$ is a basis, it is obvious that $\Delta_{F}^{(1)}= \left\{S_{F}^{-1}F\right\}.$ In this section, we consider the question of existence and uniqueness of a $1-$erasure $PA_{SO}OD $ of a given frame, which need not be a  basis.
\noindent
A few sufficient conditions for a non-empty $\Delta_{F}^{(1)}$ are discussed in the theorem below, which makes use of some notations given as follows.\\
 For a frame $F = \{f_i\}_{i=1}^N $ in $\mathcal{H},$ let $L_{\lambda}= \max \limits_{1 \leq i \leq N}  q_i\left(\lambda\left\| S_{F}^{-1/2}f_i \right\|^2 + (1-\lambda)\|f_i\|\;\left\| S_{F}^{-1}f_i\right\| \right)  $ and $\Lambda_1, \Lambda_2$ be subsets of $\{1,2,\ldots,N\}$ such that  $\Lambda_1 = \left\{ i : q_i \left( \lambda\left\| S_{F}^{-1/2}f_i \right\|^2 + (1-\lambda)\|f_i\|\;\left\| S_{F}^{-1}f_i\right\| \right) = L_{\lambda}  \right\} $ and $\Lambda_2 = \{1,2,\ldots, N\} \setminus \Lambda_1.$ We denote  $\text{span} \left\{f_i : i \in \Lambda_j  \right\}$ by $H_j,$ for $j=1,2.$\\

\begin{thm} \label{thm3point8}
	Let $F = \{f_i\}_{i=1}^N $ be a frame for $\mathcal{H}$ and $H_1,H_2,\Lambda_{1}$ be as defined above. Let  $\{q_i\}_{i=1}^N $ be a weight number sequence and  $0\le \lambda\le 1.$ If $H_1 \cap H_2 = \{0\}$ and $\{f_i\}_{i\in \Lambda_1}$  is linearly independent, then $S_{F}^{-1} F \in \Delta_{F}^{(1)}.$ Moreover, if $N=n,$ then $\Delta_{F}^{(1)} = \{ S_{F}^{-1} F \}$  and  if $N > n,$ then $\{S_{F}^{-1} F \} \subsetneq \Delta_{F}^{(1)}.$ In fact, $\Delta_{F}^{(1)}$ is uncountable in this case.
\end{thm}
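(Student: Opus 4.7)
The plan is to compute $\mathcal{A}_{\lambda P}^{(1)}(F,S_F^{-1}F)$ via Proposition \ref{prop3point1}, show that this value is a lower bound for $\mathcal{A}_{\lambda P}^{(1)}(F,G)$ on every dual $G$, and then handle the two dimension cases separately. First, since $\langle f_i,S_F^{-1}f_i\rangle=\|S_F^{-1/2}f_i\|^2\ge 0$, Proposition \ref{prop3point1} immediately gives $\mathcal{A}_{\lambda P}^{(1)}(F,S_F^{-1}F)=L_\lambda$, so the question is only whether any other dual can beat $L_\lambda$.

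To establish $\mathcal{A}_{\lambda P}^{(1)}(F,G)\ge L_\lambda$ for every dual $G$, I would write $G=\{S_F^{-1}f_i+u_i\}_{i=1}^N$, so that the duality condition becomes $\sum_{i=1}^N\langle f,u_i\rangle f_i=0$ for all $f\in\mathcal{H}$ (the adjoint form of \eqref{eqn2point2}). Splitting this sum along $\Lambda_1,\Lambda_2$ yields
\[
\sum_{i\in\Lambda_1}\langle f,u_i\rangle f_i=-\sum_{i\in\Lambda_2}\langle f,u_i\rangle f_i,
\]
with the left-hand side in $H_1$ and the right-hand side in $H_2$; since $H_1\cap H_2=\{0\}$, both sides vanish. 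Linear independence of $\{f_i\}_{i\in\Lambda_1}$ then forces $u_i=0$, and hence $g_i=S_F^{-1}f_i$, for every $i\in\Lambda_1$. Consequently each such $i$ contributes exactly $L_\lambda$ to the maximum in Proposition \ref{prop3point1}, giving $\mathcal{A}_{\lambda P}^{(1)}(F,G)\ge L_\lambda$. This proves $S_F^{-1}F\in\Delta_F^{(1)}$. When $N=n$, $F$ is a basis and $S_F^{-1}F$ is its unique dual, so $\Delta_F^{(1)}=\{S_F^{-1}F\}$.

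For the case $N>n$, I plan to construct a one-parameter family of distinct optimal duals by perturbing the canonical dual only on the $\Lambda_2$ slots. Since $F$ spans $\mathcal{H}$, $H_1+H_2=\mathcal{H}$; combined with $H_1\cap H_2=\{0\}$ and $\dim H_1=|\Lambda_1|$ this gives $\dim H_2=n-|\Lambda_1|$, so the synthesis map $c\mapsto\sum_{i\in\Lambda_2}c_if_i$ has a kernel of dimension $|\Lambda_2|-(n-|\Lambda_1|)=N-n>0$. Picking a nonzero $c$ in that kernel and any nonzero $v\in\mathcal{H}$, I set $u_i=0$ for $i\in\Lambda_1$ and $u_i=\overline{c_i}v$ for $i\in\Lambda_2$; a direct check gives $\sum_{i=1}^N\langle f,u_i\rangle f_i=\langle f,v\rangle\sum_{i\in\Lambda_2}c_if_i=0$, so $G_t=\{S_F^{-1}f_i+tu_i\}_{i=1}^N$ is a dual of $F$ for every $t\in\mathbb{R}$, and the $G_t$ are pairwise distinct since at least one $u_i$ is nonzero.

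The main (mild) obstacle is confirming that $G_t\in\Delta_F^{(1)}$ for a full neighborhood of $t=0$. For $i\in\Lambda_1$ the $i$-th entry in the maximum defining $\mathcal{A}_{\lambda P}^{(1)}(F,G_t)$ is independent of $t$ and equals $L_\lambda$; for $i\in\Lambda_2$, the value at $t=0$ is strictly less than $L_\lambda$ by the very definition of $\Lambda_2$, and by joint continuity of $|\langle f_i,S_F^{-1}f_i+tu_i\rangle|$ and $\|S_F^{-1}f_i+tu_i\|$ in $t$ it remains at most $L_\lambda$ for all sufficiently small $|t|$. Hence $\mathcal{A}_{\lambda P}^{(1)}(F,G_t)=L_\lambda$ on an open interval of $t$, exhibiting uncountably many elements of $\Delta_F^{(1)}$.
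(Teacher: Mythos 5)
Your proof is correct and follows essentially the same route as the paper: force $u_i=0$ on $\Lambda_1$ via the direct-sum condition and linear independence, conclude $\mathcal{A}_{\lambda P}^{(1)}(F,G)\ge L_\lambda=\mathcal{A}_{\lambda P}^{(1)}(F,S_F^{-1}F)$, and for $N>n$ perturb only the $\Lambda_2$ slots and invoke continuity to keep those entries below $L_\lambda$ for small $|t|$. The only (harmless) difference is that you construct the perturbation explicitly from a kernel element of the synthesis map on $\Lambda_2$, whereas the paper simply invokes the existence of a noncanonical dual, which necessarily vanishes on $\Lambda_1$ by the first part of the argument.
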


\begin{proof}
	Let $G$ be any dual of $F.$ Then, as mentioned in \eqref{eqn2point2}, $G$ can be written as $G = \left\{ S_{F}^{-1}f_i + u_i \right\}_{i=1}^N,$ where $\sum\limits_{i=1}^N \langle f,u_i \rangle f_i =0, \; f \in \mathcal{H}.$ This in turn implies that  $\sum\limits_{i \in \Lambda_1} \langle f,u_i \rangle f_i =0 = \sum\limits_{i \in \Lambda_2} \langle f,u_i \rangle f_i, $ as $H_1 \cap H_2 = \{0\}.$ The linear independence of $\{f_i : i \in \Lambda_1\}$ then gives $\langle f,u_i \rangle = 0,$ for all $i \in \Lambda_1, f \in \mathcal{H}.$ Therefore, $u_i =0,$ for all $i \in \Lambda_1.$	Now, by Proposition \ref{prop3point1},
	\begin{align*}
		\mathcal{A}_{\lambda P}^{(1)}(F,G) &=  \max\limits_{1 \leq i \leq N} q_i\left(\lambda \left|\langle f_i,S_{F}^{-1} f_{i} + u_i \rangle \right| + (1-\lambda) \|f_i\| \left\|S_{F}^{-1} f_{i} + u_i\right\|\right)  \\& \geq \max\limits_{i \in \Lambda_1}q_i\left(\lambda \left|\langle f_i,S_{F}^{-1} f_{i} + u_i \rangle \right| + (1-\lambda) \|f_i\| \left\|S_{F}^{-1} f_{i} + u_i\right\|\right)   \\&= \max\limits_{i \in \Lambda_1} q_i \left(\lambda \left\| S_{F}^{-1/2} f_{i} \right\|^2 + (1-\lambda)\|f_i\| \left\|S_{F}^{-1} f_{i} \right\| \right) \\&= L_{\lambda} \\&= \max\limits_{1 \leq i \leq N}  q_i\left(\lambda \left|\langle f_i,S_{F}^{-1} f_{i} + u_i \rangle \right| + (1-\lambda) \|f_i\| \left\|S_{F}^{-1} f_{i} + u_i\right\|\right)  \\&= \mathcal{A}_{\lambda p}^{(1)}(F,S_{F}^{-1}F ).
	\end{align*}
	Hence, the canonical dual is a 1-erasure $PA_{SO}OD$ of $F.$
	\par If $N=n,$ then $S_{F}^{-1} F$ is the only dual of $F.$ Hence, $\Delta_{F}^{(1)} = \{ S_{F}^{-1} F \}.$ Suppose $N >n.$ Then, there exists a dual $G$ of $F$ other than the canonical dual. In other words, there exists a dual $G = \{g_i\}_{i=1}^N = \left\{S_{F}^{-1} f_{i} + u_i \right\}_{i=1}^N$ of $F$ such that $u_i =0$ for all $i \in \Lambda_1$ and $u_{i_0} \neq 0$ for some $i_0 \in \Lambda_2.$ As $t \mapsto $ $q_i\left(\lambda|\langle f_i, S_{F}^{-1} f_{i} + t u_i \rangle | + (1-\lambda)\|f_i \| \left\| S_{F}^{-1} f_{i} + t u_i \right\|\right)$ is a continuous function on $\mathbb{R}$ and  $q_i\left(\lambda|\langle f_i, S_{F}^{-1} f_{i}  \rangle | + (1-\lambda)\|f_i \| \left\| S_{F}^{-1} f_{i}  \right\|\right)<L_{\lambda},$ for all $i \in \Lambda_2,$ there exists a $\delta > 0 $  such that for any $0 \leq |t| \leq \delta,$  $q_i\left(\lambda|\langle f_i, S_{F}^{-1} f_{i} + t u_i \rangle | + (1-\lambda)\|f_i \| \left\| S_{F}^{-1} f_{i} + t u_i \right\|\right) < L_{\lambda}, $ for all $i \in \Lambda_2.$ Let $|t| \leq \delta.$ Clearly, $G_t = \{ S_{F}^{-1} f_{i} + t u_i\}_{i=1}^N $
	is a dual of $F.$
	Further,
	\begin{align*}
		\mathcal{A}_{\lambda P}^{(1)}(F,G_t) &=  \max\limits_{1 \leq i \leq N} q_i\left(\lambda|\langle f_i, S_{F}^{-1} f_{i} + t u_i \rangle | + (1-\lambda)\|f_i \| \left\| S_{F}^{-1} f_{i} + t u_i \right\|\right)  \\& = L_{\lambda} \\& =  \max\limits_{1 \leq i \leq N} q_i\left(\lambda|\langle f_i, S_{F}^{-1} f_{i}  \rangle | + (1-\lambda)\|f_i \| \left\| S_{F}^{-1} f_{i}  \right\|\right)\\&= \mathcal{A}_{\lambda P}^{(1)}(F,S_{F}^{-1}F) \\&= \mathcal{A}_{\lambda P}^{(1)}(F),
	\end{align*}
	thereby showing that $G_t = \{ S_{F}^{-1} f_{i} + t u_i\}_{i=1}^N \in \Delta_{F}^{(1)},$ for $|t| \leq \delta.$
\end{proof}
\noindent It may be noted however that the linear independence of $\{f_i\}_{i \in \Lambda_1}$ is not necessary for $S_{F}^{-1} F \in \Delta_{F}^{(1)},$ as illustrated by the example below.

\begin{example}
	Let $ \mathcal{H}= {\mathbb{R}}^2$,\,$F= \{f_1,f_2,f_3\} \subset \mathcal{H},$ where   $f_1 = \left[\begin{array}{l}
		1 \\0
	\end{array}\right] $,
	$f_2 = \left[\begin{array}{l}
		0 \\ \frac{1}{2}
	\end{array}\right] $,
	$ f_3 = \left[\begin{array}{l}
		0 \\ \frac{1}{2}
	\end{array}\right] $ and  let  $\{q_i\}_{i=1}^3 = \{1,2,2\}$ be the weight number sequence and  $0\le \lambda < 1.$
	\par The above sequence $F$ is a frame with bounds $\frac{1}{2}$ and $1.$ 	It can be verified that the canonical dual of $F$ is
	$S_{F}^{-1} F = \left\{ \left[\begin{array}{r} 1 \\ 0 \end{array}\right],  \left[\begin{array}{r} 0 \\ 1 \end{array}\right], \left[\begin{array}{l} 0 \\ 1  \end{array}\right] \;\right\}. $ Clearly,
	$$L_{\lambda}= \max \left\{ q_i\left(\lambda\left\| S_{F}^{-1/2}f_i \right\|^2 + (1-\lambda)\|f_i\|\;\left\| S_{F}^{-1}f_i\right\| \right) : 1 \leq i \leq 3 \right\} = \max \left\{1, 1, 1\right\} = 1.$$
	 So, $\Lambda_1 = \{1,2,3\},\Lambda_2 = \phi.$ $H_1 = \left\{\left[\begin{array}{l}
		a \\ b
	\end{array}\right]: a,b \in \mathbb{R}\right\} \text{ and } H_2 = \left\{\left[\begin{array}{l}
		 0\\ 0
	\end{array}\right]\right\}.$ Obviously, $H_1 \cap H_2 =\{0\}$ and  $\{f_i\}_{i \in \Lambda_1}$ is linearly dependent. It is known that any dual of $F$ is of the form $G= \{S_F^{-1}f_i + u_i\}_{1\leq i \leq 3},$ where  $\sum\limits_{1 \leq i \leq 3} \langle f,f_i \rangle u_i =0,\forall f\in \mathbb{R}^2$ and  it can be shown that $\{u_i\}_{1 \leq i \leq 3} =  \left\{\left[\begin{array}{l}
		0 \\ 0
	\end{array}\right], \left[\begin{array}{l}
		\alpha \\ \beta
	\end{array}\right], \left[\begin{array}{l}
		-\alpha \\ -\beta
	\end{array}\right]\right\}$, $ \alpha,\,\beta \in \mathbb{R}.$ Therefore, all the duals are of the form $G= \left\{ \left[\begin{array}{r} 1 \\ 0 \end{array}\right],  \left[\begin{array}{r} \alpha \\ 1 + \beta \end{array}\right], \left[\begin{array}{l} -\alpha \\ 1- \beta  \end{array}\right] \;\right\},$ where $\alpha,\,\beta \in \mathbb{R}.$ Now,  $\mathcal{A}_{\lambda P}^{(1)}(F,S_{F}^{-1}F) = L_{\lambda}= 1$ and $$\mathcal{A}_{\lambda P}^{(1)}(F,G) =  \max \left\{ 1,\, \lambda\left| 1+\beta \right| + (1-\lambda)\sqrt{\alpha^2 + (1+\beta)^2},\,  \lambda\left| 1-\beta \right| + (1-\lambda)\sqrt{\alpha^2 + (1-\beta)^2} \right\}.$$ For $\beta >0,$ we have $\lambda\left| 1+\beta \right| + (1-\lambda)\sqrt{\alpha^2 + (1+\beta)^2} > 1$ and so the corresponding $G$ has $\mathcal{A}_{\lambda P}^{(1)}(F,G) > 1.$ Likewise, for $\beta <0,$   $\lambda\left| 1-\beta \right| + (1-\lambda)\sqrt{\alpha^2 + (1-\beta)^2} > 1$ and $\mathcal{A}_{\lambda P}^{(1)}(F,G) > 1.$ For $\beta =0,$ $\mathcal{A}_{\lambda P}^{(1)}(F,G) =  \max \left\{ 1,\, \lambda + (1-\lambda)\sqrt{\alpha^2 + 1} \right\}> 1 ,$ when $\alpha \neq 0.$ This implies that $\mathcal{A}_{\lambda P}^{(1)}(F) = 1$ and hence, $\Delta_{F}^{(1)} = \left\{S_F^{-1}F\right\}.$
	
\end{example}
\begin{rem}
	The above example also shows that the condition on the linear independence of $\{f_i\}_{i \in \Lambda_1}$ cannot be dropped for having $\Delta_{F}^{(1)}$ uncountable when $N>n.$
\end{rem}

The following result provides some other sufficient conditions under which $\Delta_{F}^{(1)}$ is not empty for a tight frame $F$. In fact, it is unique in this case.

\begin{thm}\label{thm3point7}
	Let $F = \{f_i\}_{i=1}^N $ be a tight frame for $\mathcal{H}.$  Let  $\{q_i\}_{i=1}^N $ be a weight number sequence and $0\le \lambda < 1.$ If there exists a constant $c$ such that  $q_i\|f_i \|^2 = c$  for all $1 \leq i \leq N,$  then $\Delta_{F}^{(1)} = \left\{ S_{F}^{-1}F \right\}. $
\end{thm}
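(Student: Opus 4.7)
The plan is to show that $\mathcal{A}_{\lambda P}^{(1)}(F) = c/A$, where $A$ is the (common) frame bound of the tight frame $F$, and to pin down the canonical dual as the unique minimizer by carefully tracing the equality case through a chain of inequalities. The key structural identity is obtained by taking the trace of $S_{F} = A I$: $\sum_{i}\|f_i\|^2 = An$, which when combined with $q_i\|f_i\|^2 = c$ yields $\sum_{i}1/q_i = An/c$. Since $F$ is $A$-tight, $S_{F}^{-1}f_i = f_i/A$, so Proposition \ref{prop3point1} gives $\mathcal{A}_{\lambda P}^{(1)}(F, S_{F}^{-1}F) = \max_{i} q_i\|f_i\|^2/A = c/A$. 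This settles the ``$\le$'' direction.

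For the matching lower bound, let $G = \{g_i\}_{i=1}^{N}$ be any dual of $F$. Taking the trace of the reconstruction identity $I = \sum_{i} f_i \otimes g_i$ yields $\sum_{i}\langle f_i, g_i\rangle = n$, hence $\sum_{i}\mathrm{Re}\,\langle f_i, g_i\rangle = n$. Writing this sum as $\sum_{i}(1/q_i)\bigl(q_i\,\mathrm{Re}\,\langle f_i, g_i\rangle\bigr)$ and pulling out the maximum, one obtains $n \le \bigl(\max_{i} q_i\,\mathrm{Re}\,\langle f_i, g_i\rangle\bigr)\cdot (An/c)$, so $\max_{i} q_i\,\mathrm{Re}\,\langle f_i, g_i\rangle \ge c/A$. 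Chaining this with $\|f_i\|\|g_i\| \ge |\langle f_i, g_i\rangle| \ge \mathrm{Re}\,\langle f_i, g_i\rangle$ and Proposition \ref{prop3point1} gives $\mathcal{A}_{\lambda P}^{(1)}(F, G) \ge c/A$ for every dual $G$.

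The heart of the argument is extracting uniqueness from equality. If $\mathcal{A}_{\lambda P}^{(1)}(F, G) = c/A$, then saturation of the weighted-average inequality forces $q_i\,\mathrm{Re}\,\langle f_i, g_i\rangle = c/A$ for every $i$, i.e., $\mathrm{Re}\,\langle f_i, g_i\rangle = \|f_i\|^2/A$. A squeeze $c/A \le q_i|\langle f_i, g_i\rangle| \le \mathcal{A}_{\lambda P}^{(1)}(F, G) = c/A$ then gives $|\langle f_i, g_i\rangle| = \mathrm{Re}\,\langle f_i, g_i\rangle = \|f_i\|^2/A$, so $\langle f_i, g_i\rangle$ is real and positive. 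Finally, the identity $q_i\bigl(\lambda|\langle f_i, g_i\rangle| + (1-\lambda)\|f_i\|\|g_i\|\bigr) = c/A$ combined with $\lambda < 1$ yields $\|f_i\|\|g_i\| = |\langle f_i, g_i\rangle|$; Cauchy--Schwarz equality forces $g_i = \gamma_i f_i$ for some scalar $\gamma_i$, and plugging into $\langle f_i, g_i\rangle = \bar{\gamma_i}\|f_i\|^2 = \|f_i\|^2/A$ produces $\gamma_i = 1/A$, so $g_i = S_{F}^{-1}f_i$.

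The main obstacle is the very last squeeze: one must invoke $\lambda < 1$ strictly to promote the mere Cauchy--Schwarz inequality $\|f_i\|\|g_i\| \ge |\langle f_i, g_i\rangle|$ to equality, which is what actually locks $g_i$ onto the line through $f_i$. At $\lambda = 1$ the weight $(1-\lambda)$ on the operator-norm term vanishes, and many non-canonical duals achieve the spectral-only bound, so uniqueness genuinely fails there, illustrating why the hypothesis $\lambda < 1$ is necessary rather than cosmetic.
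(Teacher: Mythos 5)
Your proof is correct, and it reaches the conclusion by a route that differs in its mechanics from the paper's. The paper argues by contradiction and works throughout with the perturbation $u_i = g_i - \frac{1}{A}f_i$: it expands $\left\|\frac{1}{A}f_i+u_i\right\|^2$ explicitly, shows that a strictly better (resp.\ equally good) dual would force $\mathrm{Re}\,\langle f_i,u_i\rangle<0$ (resp.\ $\leq 0$) for every $i$, contradicts this via the trace identity $\sum_i\langle f_i,u_i\rangle=\operatorname{tr}(\Theta_{U}\Theta_{F}^{*})=0$, and finally eliminates the perturbation by observing that $\sqrt{c^2/A^2+cq_i\|h_i\|^2}>c/A$ whenever $\|h_i\|>0$ and $\lambda<1$. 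You instead identify the optimal value $c/A$ directly: your lower bound comes from $\sum_i\langle f_i,g_i\rangle=n$ together with $\sum_i 1/q_i=An/c$ (correctly derived from $\operatorname{tr}(S_F)=An$ and $q_i\|f_i\|^2=c$), and uniqueness is extracted from the equality case --- termwise saturation of the averaged sum forces $q_i\,\mathrm{Re}\,\langle f_i,g_i\rangle=c/A$ for every $i$, the squeeze then gives $|\langle f_i,g_i\rangle|=\mathrm{Re}\,\langle f_i,g_i\rangle=\|f_i\|^2/A$, and $\lambda<1$ upgrades Cauchy--Schwarz to equality so that $g_i=\gamma_i f_i$ with $\gamma_i=1/A$. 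The two arguments run on the same two engines (the trace identity and the strict positivity of $1-\lambda$), but your closing move --- linear dependence from Cauchy--Schwarz equality --- replaces the paper's explicit norm expansion, never introduces the perturbation sequence, and yields the existence and uniqueness statements in one pass; it is arguably cleaner. One small point you should make explicit: you need $c>0$, hence $f_i\neq 0$ for every $i$, both to divide by $c$ and to conclude $g_i=\gamma_i f_i$ from the equality case of Cauchy--Schwarz; this holds because a frame for a nonzero space contains some nonzero vector, and then $q_j\|f_j\|^2=c>0$ propagates to all indices.
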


\begin{proof}
	Let $F = \{f_i\}_{i=1}^N$ be a tight frame with bound $A.$ Suppose  $ S_{F}^{-1}F  \notin \Delta_{F}^{(1)}.$ Then, $F$ cannot be a  basis and there exists a dual $G = \left\{ S_{F}^{-1}f_i + u_i \right\}_{i=1}^N = \left\{\frac{1}{A}f_i + u_i\right\}_{i=1}^N $ of $F,$ with $ \sum \limits_{i=1}^N \langle f, u_i \rangle f_i =0,\;\forall f \in \mathcal{H},$  such that $\mathcal{A}_{\lambda P}^{(1)}(F,G) < \mathcal{A}_{\lambda P}^{(1)}(F,S_{F}^{-1}F).$ Then, for $1 \leq i \leq N,$
	
	\begin{align*}
		 q_i\bigg\{ \lambda|\langle f_i, \frac{1}{A}f_{i} + u_i\rangle | + (1-\lambda) \|f_i\|\; \left\|\frac{1}{A} f_{i} +u_i\right\| \bigg\} &\leq \max\limits_{1 \leq i \leq N} q_i\bigg\{ \lambda|\langle f_i, \frac{1}{A}f_{i} + u_i\rangle | + (1-\lambda) \|f_i\|\; \left\|\frac{1}{A} f_{i} +u_i\right\| \bigg\} \\&< \max\limits_{1 \leq i \leq N} \frac{q_i \|f_i\|^2}{A}  = \frac{c}{A}.
	\end{align*}
	Also,
	\begin{align*}
	&q_i\bigg\{ \lambda|\langle f_i, \frac{1}{A}f_{i} + u_i\rangle | + (1-\lambda) \|f_i\|\; \left\|\frac{1}{A} f_{i} +u_i\right\| \bigg\} \\ &= \lambda \left| \frac{q_i}{A}\|f_i\|^2 + q_i\langle f_i, u_i\rangle \right| +  (1-\lambda)\sqrt{\frac{q^2_i}{A^2}\|f_i\|^4 + q^2_i \|f_i\|^2 \,\|u_i\|^2 + \frac{2q^2_i \|f_i\|^2}{A} Re \langle f_i, u_i\rangle}  \\&=\lambda \left| \frac{c}{A} + q_i\langle f_i, u_i\rangle \right| + (1-\lambda) \sqrt{\frac{c^2}{A^2} + cq_i\|u_i\|^2 + \frac{2cq_i}{A} Re \langle f_i, u_i\rangle} .
	\end{align*}
	So, $$\lambda \left| \frac{c}{A} + q_i\langle f_i, u_i\rangle \right| + (1-\lambda) \sqrt{\frac{c^2}{A^2} + cq_i\|u_i\|^2 + \frac{2cq_i}{A} Re \langle f_i, u_i\rangle} < \frac{c}{A},\; \text{for all}\; 1\leq i \leq N.$$ Now, if $Re \langle f_i,u_i \rangle \geq 0,$ then
	\begin{align*} &\left| \frac{c}{A} + q_i\langle f_i, u_i\rangle \right| \geq \frac{c}{A} + q_i Re \langle f_i,u_i \rangle \geq \frac{c}{A}
		\quad \text{and} \quad \frac{c^2}{A^2} + cq_i\|u_i\|^2 + \frac{2cq_i}{A} Re \langle f_i, u_i\rangle \geq  \frac{c^2}{A^2}.
	\end{align*}
	This implies that $\lambda \left| \frac{c}{A} + q_i\langle f_i, u_i\rangle \right| + (1-\lambda) \sqrt{\frac{c^2}{A^2} + cq_i\|u_i\|^2 + \frac{2cq_i}{A} Re \langle f_i, u_i\rangle}  \geq \frac{c}{A}, $ which is not true. Therefore,  $Re \langle f_i ,u_i \rangle < 0 , 1\leq i \leq N.$ On the other hand, taking  $U=\{u_i\}_{i=1}^N,$ we have  $\sum\limits_{1 \leq i \leq N} \langle f_i ,u_i \rangle = tr(\Theta_{U} \Theta_{F}^{*} ) = tr(\Theta_{F}^{*} \Theta_{U}) =0 ,$ as $\Theta_{F}^{*} \Theta_{U}$ is the zero operator.  This in turn gives $Re  \left(\sum\limits_{1 \leq i \leq N} \langle f_i ,u_i \rangle \right)  =0,$ which leads to a contradiction. Hence, $ S_{F}^{-1}F  \in \Delta_{F}^{(1)} . $\\
	
	Now, suppose $G =  \left\{\frac{1}{A}f_i + h_i\right\}_{i=1}^N \in \Delta_{F}^{(1)}.$ Then,  $\mathcal{A}_{\lambda P}^{(1)}(F,G) = \mathcal{A}_{\lambda P}^{(1)}(F,S_{F}^{-1}F).$ As earlier, we can show that $\lambda \left| \frac{c}{A} + q_i\langle f_i, h_i\rangle \right| + (1-\lambda)\sqrt{\frac{c^2}{A^2} + cq_i\|h_i\|^2 + \frac{2cq_i}{A} Re \langle f_i, h_i\rangle}  \leq \dfrac{c}{A}, $ and so $Re \langle f_i ,h_i \rangle \leq 0, \; 1 \leq i \leq N.$ Using the fact that $\sum\limits_{1 \leq i \leq N} \langle f_i ,h_i \rangle = 0,$ we have $Re\langle f_i ,h_i \rangle =0, \; 1 \leq i \leq N.$  Consequently,
	$$\lambda \left| \frac{c}{A} + i \; Im (q_i\langle f_i, h_i\rangle ) \right| + (1-\lambda)\sqrt{\frac{c^2}{A^2} + cq_i\|h_i\|^2 }  \leq \frac{c}{A}, \;\forall 1 \leq i \leq N.$$
	 Now, if $\|h_i\| > 0,$ then $\lambda \left| \frac{c}{A} + i \; Im (q_i\langle f_i, h_i\rangle ) \right| + (1-\lambda)\sqrt{\frac{c^2}{A^2} + cq_i\|h_i\|^2 } > \dfrac{c}{A}.$  So, we may conclude that $h_i =0,\forall 1 \leq i \leq N.$ In other words, $\Delta_{F}^{(1)} = \left\{ S_{F}^{-1}F \right\}. $
\end{proof}

\noindent On a different note, assuming $\Delta_{F}^{(1)}$ is non-empty for a tight frame $F = \{f_i\}_{i=1}^N,$ the following two theorems provide necessary as well as sufficient conditions for having the canonical dual frame $S_{F}^{-1}F$ as a 1-erasure $PA_{SO}OD$ of $F.$ Towards this end, we make use of the following notations. Let $M= \max \left\{ q_i\left\| f_i \right\|^2  : 1 \leq i \leq N \right\}, \Gamma_1 = \left\{i: 1 \leq i \leq N \; \text{and}\; q_i \left\|f_i \right\|^2 = M   \right\} $ and $\Gamma_2 = \{1,2,\ldots, N\} \setminus \Gamma_1.$ The $\text{span of} \left\{f_i : i \in \Gamma_j  \right\}$ shall be denoted  by $E_j,$ for $j=1,2.$

\begin{thm}\label{prop3point2}
	Let $F$ be a tight frame for $\mathcal{H}.$ Let  $\{q_i\}_{i=1}^N $ be a weight number sequence and  $0\le \lambda\le 1.$ Suppose  $\Delta_{F}^{(1)} \neq \phi.$ If $E_1 \cap E_2 = \{0\},$ then  $S_{F}^{-1}F \in \Delta_{F}^{(1)}.$
\end{thm}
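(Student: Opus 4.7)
The plan is to fix an optimal dual $G^{*}=\{\tfrac{1}{A}f_i+h_i\}_{i=1}^N \in \Delta_F^{(1)}$ (which exists by hypothesis), where $A$ is the tight frame bound, and reduce the theorem to the single inequality $\mathcal{A}_{\lambda P}^{(1)}(F,S_F^{-1}F)\le \mathcal{A}_{\lambda P}^{(1)}(F,G^{*})$. A direct computation via Proposition \ref{prop3point1} and tightness gives $\mathcal{A}_{\lambda P}^{(1)}(F,S_F^{-1}F) = \max_i \tfrac{q_i\|f_i\|^2}{A} = \tfrac{M}{A}$, so what I really need is to produce some index $i\in \Gamma_1$ for which the $i$-th term in the expression for $\mathcal{A}_{\lambda P}^{(1)}(F,G^{*})$ is bounded below by $\tfrac{M}{A}$.

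The decisive observation, and where the hypothesis $E_1\cap E_2=\{0\}$ enters, is that the dual condition $\sum_{i=1}^N\langle f,h_i\rangle f_i=0$ splits: since $\sum_{i\in\Gamma_1}\langle f,h_i\rangle f_i\in E_1$ and $\sum_{i\in\Gamma_2}\langle f,h_i\rangle f_i\in E_2$ are vectors whose sum is $0$, each must vanish. Consequently, writing $F_1=\{f_i\}_{i\in\Gamma_1}$ and $H_1=\{h_i\}_{i\in\Gamma_1}$, the operator $\Theta_{F_1}^{*}\Theta_{H_1}$ is identically $0$, so taking traces (as in the proof of Theorem \ref{thm3point7}) yields $\sum_{i\in\Gamma_1}\langle f_i,h_i\rangle = 0$. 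Taking real parts then forces at least one $i_0\in\Gamma_1$ with $\operatorname{Re}\langle f_{i_0},h_{i_0}\rangle\ge 0$.

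For this $i_0$, since $q_{i_0}\|f_{i_0}\|^{2}=M$ and $\operatorname{Re}\langle f_{i_0},h_{i_0}\rangle\ge 0$, I have
\begin{align*}
\left|\tfrac{1}{A}\|f_{i_0}\|^{2}+\langle f_{i_0},h_{i_0}\rangle\right| &\ge \tfrac{\|f_{i_0}\|^{2}}{A}, \\
\left\|\tfrac{1}{A}f_{i_0}+h_{i_0}\right\|^{2} &= \tfrac{\|f_{i_0}\|^{2}}{A^{2}}+\|h_{i_0}\|^{2}+\tfrac{2}{A}\operatorname{Re}\langle f_{i_0},h_{i_0}\rangle\ \ge\ \tfrac{\|f_{i_0}\|^{2}}{A^{2}},
\end{align*}
so the $i_0$-th term in $\mathcal{A}_{\lambda P}^{(1)}(F,G^{*})$ is at least $\tfrac{q_{i_0}\|f_{i_0}\|^{2}}{A}=\tfrac{M}{A}$. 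Hence $\mathcal{A}_{\lambda P}^{(1)}(F,G^{*})\ge \tfrac{M}{A}=\mathcal{A}_{\lambda P}^{(1)}(F,S_F^{-1}F)$, and since $G^{*}$ is optimal the reverse inequality is automatic, giving $S_F^{-1}F\in\Delta_F^{(1)}$.

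The only real obstacle is the combinatorial/geometric step of splitting the dual relation along $\Gamma_1$ and $\Gamma_2$; once that splitting is in hand, the trace identity on $\Gamma_1$ alone is immediate, and the inequality chain for a chosen $i_0$ is a short convexity/modulus estimate. The rest of the argument is routine bookkeeping using Proposition \ref{prop3point1} and the definition of $M$.
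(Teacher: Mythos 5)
Your proof is correct and follows essentially the same route as the paper's: both use $E_1\cap E_2=\{0\}$ to split the dual relation along $\Gamma_1$ and $\Gamma_2$ and then invoke the trace identity $\sum_{i\in\Gamma_1}\langle f_i,h_i\rangle=0$. The only difference is a mild streamlining: the paper first derives $\operatorname{Re}\langle f_i,u_i\rangle\le 0$ for every $i\in\Gamma_1$ from optimality and then upgrades this to equality before taking the maximum over $\Gamma_1$, whereas you extract a single index $i_0$ with $\operatorname{Re}\langle f_{i_0},h_{i_0}\rangle\ge 0$ directly from the trace identity, which already forces the $i_0$-th term up to $M/A$ and suffices.
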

\begin{proof}
	Let $F = \{f_i\}_{i=1}^N$ be a tight frame with  bound $A$ and  $ G  \in \Delta_{F}^{(1)}.$ Then,  $G = \left\{ \frac{1}{A} f_i +u_i \right\}_{i=1}^N,$ where $\sum \limits_{1 \leq i \leq N} \langle f,u_i \rangle f_i =0,$ for all $f \in \mathcal{H}.$ As $E_1 \cap E_2 = \{0\},$ we get $\sum\limits_{i \in \Gamma_1} \langle f,u_i \rangle f_i =0 = \sum\limits_{i \in \Gamma_2} \langle f,u_i \rangle f_i.$ Taking  $F' = \{f_i\}_{ i \in \Gamma_1}$ and $U' = \{u_i \}_{ i \in \Gamma_1 },$ we then have
	\begin{equation} \label{eqn3point4}
		\sum\limits_{i \in \Gamma_1} \langle f_i , u_i \rangle = tr (\Theta_{U'} \Theta_{F'}^{*}) = tr(\Theta_{F'}^{*} \Theta _{U'}) = 0.
	\end{equation}
	\noindent For $1 \leq i \leq N,$ consider
	\begin{align}\label{eqn3point5}
		q_i \left|\left\langle f_i,\frac{1}{A} f_{i} + u_i \right\rangle \right| \nonumber&\leq \lambda q_i\left|\left\langle f_i,\frac{1}{A} f_{i} + u_i \right\rangle \right| + (1-\lambda)q_i\|f_i\| \left\|\frac{1}{A} f_{i} + u_i\right\| \nonumber\\& \leq \max_{1 \leq i \leq N} \lambda q_i\left|\left\langle f_i,\frac{1}{A} f_{i} + u_i \right\rangle \right| + (1-\lambda)q_i\|f_i\| \left\|\frac{1}{A} f_{i} + u_i\right\|\nonumber \\&=\mathcal{A}_{\lambda P}^{(1)}(F,G)\nonumber \\& \leq \mathcal{A}_{\lambda P}^{(1)}(F, S_{F}^{-1}F)\nonumber \\&= \max_{1 \leq i \leq N}\frac{q_i \|f_i\|^2}{A}  \nonumber \\& = \frac{M}{A},
	\end{align}
	by making use of Cauchy-Schwarz inequality for the first inequality above. Therefore,
	\begin{equation}\label{eqn3point6}
		\left| \frac{M}{A} + q_i\langle f_i, u_i\rangle \right| \leq \frac{M}{A},\;\forall i \in \Gamma_1.
	\end{equation}
	If $M=0,$ then $\mathcal{A}_{\lambda P}^{(1)}(F,S_{F}^{-1}F) = \frac{M}{A} =0$ and so $\mathcal{A}_{\lambda P}^{(1)}(F) =0. $ This implies that $S_{F}^{-1} F \in \Delta_{F}^{(1)}.$ Suppose $M \neq 0.$ Then $q_i >0,\;\forall i \in \Gamma_1.$ From  (\ref{eqn3point6}), it follows that $Re \langle f_i,u_i \rangle \leq 0,\;\forall i \in \Gamma_1,$ for otherwise we will have a contradiction. Using (\ref{eqn3point4}), we conclude that $Re \langle f_i,u_i \rangle =0, \; \forall i \in \Gamma_1.$ Now,
	\begin{align*}
		\mathcal{A}_{\lambda P}^{(1)}(F,G)  & \geq \max\limits_{i \in \Gamma_1}  q_i \left(\lambda \left|\left\langle f_i,\frac{1}{A} f_{i} + u_i \right\rangle \right| +(1-\lambda) \|f_i\| \left\|\frac{1}{A} f_{i} + u_i\right\| \right) \\&= \max\limits_{i \in \Gamma_1} q_i \left(\lambda \left|\left\langle f_i,\frac{1}{A} f_{i} + u_i \right\rangle \right| + (1- \lambda)\|f_i\| \sqrt{\frac{1}{A^2}\|f_i\|^2 + \|u_i\|^2 + \frac{2}{A} Re \langle f_i,u_i \rangle }\,\right)  \\&=  \max\limits_{i \in \Gamma_1} q_i \left( \lambda \left|\frac{1}{A} \|f_i\|^2 + i \;Im\langle f_i,u_i \rangle \right| +(1-\lambda) \|f_i\|\sqrt{\frac{1}{A^2}\|f_i\|^2 + \|u_i\|^2} \right)  \\& \geq \max\limits_{i \in \Gamma_1} \frac{q_i \|f_i\|^2}{A} \\& = \frac{M}{A} \\&= \mathcal{A}_{\lambda P}^{(1)}(F,S_{F}^{-1}F).
	\end{align*}
	As $G \in \Delta_{F}^{(1)},$  we have $\mathcal{A}_{\lambda P}^{(1)}(F)=\mathcal{A}_{\lambda P}^{(1)}(F,G)=\mathcal{A}_{\lambda P}^{(1)}(F,S_{F}^{-1}F)$ and hence, $S_{F}^{-1}F \in \Delta_{F}^{(1)}.$
\end{proof}

Under additional conditions, more can be said about $\Delta_{F}^{(1)},$ which is discussed in the theorem below.

\begin{thm}\label{thm3point5}
	Let $F$ be a tight frame for $\mathcal{H}.$  Let  $\{q_i\}_{i=1}^N $ be a weight number sequence and  $0\le \lambda < 1.$  Suppose $\Delta_{F}^{(1)} \neq \phi$  and $M >0.$  Then, $\Delta_{F}^{(1)} = \{S_{F}^{-1}F\}$ if and only if  $E_1 \cap E_2 = \{0\}$ and  $\{f_i\}_{i \in \Gamma_2}$ is linearly independent.
\end{thm}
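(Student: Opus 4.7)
For the backward direction I will refine the argument of Theorem~\ref{prop3point2}. Assuming $E_1\cap E_2=\{0\}$ and $\{f_i\}_{i\in\Gamma_2}$ linearly independent, Theorem~\ref{prop3point2} already gives $S_F^{-1}F\in\Delta_F^{(1)}$, so $\mathcal{A}_{\lambda P}^{(1)}(F)=M/A$. For any $G=\{S_F^{-1}f_i+u_i\}\in\Delta_F^{(1)}$, the chain of inequalities in that proof forces $\operatorname{Re}\langle f_i,u_i\rangle=0$ for every $i\in\Gamma_1$ (via the trace identity $\sum_{i\in\Gamma_1}\langle f_i,u_i\rangle=0$ enabled by $E_1\cap E_2=\{0\}$). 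Equality in $\mathcal{A}_{\lambda P}^{(1)}(F,G)=M/A$ then pushes this one step further: with $\lambda<1$ and $M>0$, the lower bound $q_i\lambda|z_i|+q_i(1-\lambda)\|f_i\|\,\|w_i\|\geq q_i\|f_i\|^2/A$ is saturated only when $\operatorname{Im}\langle f_i,u_i\rangle=0$ and $\|u_i\|=0$, so $u_i=0$ for all $i\in\Gamma_1$. The dual relation $\sum_i\langle f,u_i\rangle f_i=0$ then reduces to $\sum_{i\in\Gamma_2}\langle f,u_i\rangle f_i=0$, and linear independence of $\{f_i\}_{i\in\Gamma_2}$ forces $u_i=0$ on $\Gamma_2$ too, hence $G=S_F^{-1}F$.

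For the forward direction I would argue the contrapositive: assume $E_1\cap E_2\neq\{0\}$ or $\{f_i\}_{i\in\Gamma_2}$ is linearly dependent, and produce a dual $G\neq S_F^{-1}F$ lying in $\Delta_F^{(1)}$.

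If $\{f_i\}_{i\in\Gamma_2}$ is linearly dependent, choose nontrivial $(\alpha_i)_{i\in\Gamma_2}$ with $\sum_{i\in\Gamma_2}\alpha_if_i=0$ and fix any $v\in\mathcal{H}\setminus\{0\}$. Setting $u_i=0$ for $i\in\Gamma_1$ and $u_i=\bar\alpha_iv$ for $i\in\Gamma_2$ gives $(u_i)\in\mathcal{U}$. For $G_t=\{S_F^{-1}f_i+tu_i\}$ the term $\phi_i(t)=q_i\|f_i\|^2/A=M/A$ is constant on $\Gamma_1$, while continuity together with $\phi_i(0)<M/A$ on $\Gamma_2$ yields $\phi_i(t)<M/A$ for $|t|$ small. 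Hence $\mathcal{A}_{\lambda P}^{(1)}(F,G_t)=M/A=\mathcal{A}_{\lambda P}^{(1)}(F)$, so $G_t\in\Delta_F^{(1)}\setminus\{S_F^{-1}F\}$.

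Suppose instead that $\{f_i\}_{i\in\Gamma_2}$ is linearly independent while $E_1\cap E_2\neq\{0\}$. Pick $w\in E_1\cap E_2\setminus\{0\}$ and write $w=\sum_{i\in\Gamma_1}\alpha_if_i=\sum_{i\in\Gamma_2}\beta_if_i$, the latter uniquely. With $u_i=\bar\alpha_iv$ on $\Gamma_1$ and $u_i=-\bar\beta_iv$ on $\Gamma_2$ one has $(u_i)\in\mathcal{U}$ because $\sum_ic_if_i=w-w=0$. The main obstacle is the choice of $v\in\mathcal{H}$: for every $i\in\Gamma_1$ with $\alpha_i\neq 0$ one needs $\operatorname{Re}(\alpha_i\langle f_i,v\rangle)\leq 0$, with strictness at indices where a positive second-order contribution would otherwise drive $\phi_i$ above $M/A$. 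This is a Farkas/Gordan feasibility problem on $\{\alpha_if_i:\alpha_i\neq 0\}$; if $0$ lies in the convex hull of these vectors, direct Gordan fails and one passes to a minimal positive linear dependence inside $\Gamma_1$ (which cannot be a singleton, since $M>0$ forces $\|f_i\|>0$ for $i\in\Gamma_1$) and iterates, eventually exchanging $(c_i)\in\mathcal{V}$ for a smaller-support dependence until Gordan applies. Once $v$ is obtained, $G_t$ for small $t>0$ satisfies $\mathcal{A}_{\lambda P}^{(1)}(F,G_t)\leq M/A$, placing $G_t$ in $\Delta_F^{(1)}\setminus\{S_F^{-1}F\}$, contrary to uniqueness; this last feasibility step is where the argument is most delicate.
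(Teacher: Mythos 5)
Your backward direction and the linear-dependence half of the forward direction track the paper's proof essentially step for step: the saturation argument (with $\lambda<1$ and $M>0$ forcing $\|u_i\|=0$ on $\Gamma_1$, after the trace identity gives $\operatorname{Re}\langle f_i,u_i\rangle=0$ there) followed by linear independence killing $u_i$ on $\Gamma_2$; and, for necessity, the perturbation $G_t$ built from a dependence relation on $\Gamma_2$. Those parts are correct.

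The genuine gap is in the case $E_1\cap E_2\neq\{0\}$. You expand $w=\sum_{i\in\Gamma_1}\alpha_i f_i$ over \emph{all} of $\Gamma_1$ and then must choose $v$ with $\operatorname{Re}(\alpha_i\langle f_i,v\rangle)\le 0$ (strict where needed) for every $i$ with $\alpha_i\neq 0$; as you note, this system of inequalities can be infeasible when $0$ lies in the convex hull of $\{\alpha_i f_i:\alpha_i\neq 0\}$, and your proposed fix (iterating through smaller-support positive dependences "until Gordan applies") is not carried out — in particular you do not verify that the modified coefficients still represent a nonzero element of $E_1\cap E_2$ or that the resulting sequence still satisfies the duality constraint $\sum_i\langle f,f_i\rangle u_i=0$. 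The paper avoids the inequality system entirely: since $w\neq 0$, choose a \emph{minimal} sub-representation $w=\sum_{j\in\Gamma_1'}c_jf_j$ with $\Gamma_1'\subseteq\Gamma_1$, so that $\{c_jf_j\}_{j\in\Gamma_1'}$ is linearly independent; Lemma \ref{lem3point6} then solves the linear \emph{equations} $\langle c_jf_j,h\rangle=-1$ exactly, giving $\langle f_j,\bar c_jh\rangle<0$ for all $j\in\Gamma_1'$. Setting $\tilde u_j=\bar c_jh$ on $\Gamma_1'$, $\tilde u_j=0$ on $\Gamma_1\setminus\Gamma_1'$ and $\tilde u_k=-\bar d_kh$ on $\Gamma_2$ keeps the $\Gamma_1\setminus\Gamma_1'$ terms pinned at $M/A$ while the remaining terms drop to or below $M/A$ for small $t>0$, yielding a noncanonical member of $\Delta_F^{(1)}$. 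Replace your Farkas/Gordan step with this reduction to a linearly independent support; as written, your argument does not close.
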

In order to prove the above theorem, we make use of the following lemma.

\begin{lem}\label{lem3point6}
	Let $\{f_1,\ldots,f_m\} \subset \mathcal{H}$ be linearly independent and $\alpha $ be a scalar. Then, there exists $h \in \mathcal{H}$ such that $\langle f_i,h \rangle = \alpha, \;\forall\,i=1,2,\ldots,m.$
\end{lem}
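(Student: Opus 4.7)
The plan is to reduce the problem to a finite-dimensional one inside the subspace $V = \operatorname{span}\{f_1,\ldots,f_m\}$ and exploit the hypothesis of linear independence through the adjoint relationship between the analysis and synthesis operators associated with $F' = \{f_1,\ldots,f_m\}$.

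Concretely, I would consider the synthesis operator $\Theta_{F'}^*: \mathbb{C}^m \to \mathcal{H}$ defined (as in the preliminaries) by $\Theta_{F'}^*(c_1,\ldots,c_m) = \sum_{i=1}^m c_i f_i$. Linear independence of $\{f_1,\ldots,f_m\}$ is exactly the statement that $\Theta_{F'}^*$ is injective. Its adjoint $\Theta_{F'}: \mathcal{H} \to \mathbb{C}^m$ is given by $\Theta_{F'}(h) = (\langle h, f_1 \rangle, \ldots, \langle h, f_m\rangle)$. Since $\operatorname{range}(\Theta_{F'}^*)$ is a finite-dimensional subspace of $\mathcal{H}$, it is closed, and the standard Hilbert-space identity $(\ker \Theta_{F'}^*)^\perp = \overline{\operatorname{range}(\Theta_{F'})}$ combined with $\ker \Theta_{F'}^* = \{0\}$ forces $\operatorname{range}(\Theta_{F'}) = \mathbb{C}^m$. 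Thus $\Theta_{F'}$ is surjective.

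With surjectivity in hand, I would simply choose $h \in \mathcal{H}$ such that $\Theta_{F'}(h) = (\overline{\alpha},\overline{\alpha},\ldots,\overline{\alpha})$; this yields $\langle h, f_i\rangle = \overline{\alpha}$ for every $i$, and therefore $\langle f_i, h\rangle = \alpha$ for every $i$, which is the desired conclusion. An equally clean alternative is an explicit construction: on $V$ the Gram matrix $G_{ij} = \langle f_j, f_i\rangle$ is positive definite (hence invertible) by linear independence, so one can solve for coefficients $c_1,\ldots,c_m$ making $h = \sum_j c_j f_j$ satisfy $\langle f_i, h\rangle = \alpha$ for all $i$.

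There is no genuine obstacle here: the lemma is a direct consequence of the equivalence between injectivity of an operator and surjectivity of its adjoint in this finite-dimensional codomain setting. The only minor care needed is keeping track of the convention that $\langle \cdot,\cdot \rangle$ is linear in the first slot (matching the paper's analysis/synthesis formulas), which is why the preimage is taken at $(\overline{\alpha},\ldots,\overline{\alpha})$ rather than $(\alpha,\ldots,\alpha)$.
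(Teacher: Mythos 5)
Your proof is correct, but it runs along a different track from the paper's. The paper argues in coordinates: it expands each $f_i$ in an orthonormal basis, writes the conditions $\langle f_i,h\rangle=\alpha$ as an $m\times n$ linear system, and observes that linear independence makes the coefficient matrix (and hence the augmented matrix) have rank $m$, so a solution exists. You instead invoke the operator-theoretic duality: injectivity of the synthesis operator $\Theta_{F'}^*$ is equivalent to surjectivity of the analysis operator $\Theta_{F'}$ via $\operatorname{range}(\Theta_{F'})=(\ker\Theta_{F'}^*)^\perp$ in this finite-dimensional setting, and then you pull back $(\overline{\alpha},\ldots,\overline{\alpha})$. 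The two arguments encode the same underlying fact (a full-row-rank system is solvable for any right-hand side), but yours is coordinate-free and generalizes verbatim to an infinite-dimensional $\mathcal{H}$ with a finite linearly independent family, whereas the paper's is the more elementary hands-on computation. Your Gram-matrix alternative is a third, equally valid route. One small point in your favor: you track the conjugation explicitly (solving $\langle h,f_i\rangle=\overline{\alpha}$ to get $\langle f_i,h\rangle=\alpha$), whereas the paper's system $\sum_j f_{ij}h_j=\alpha$ actually produces $\langle f_i,\sum_j \overline{h_j}e_j\rangle=\alpha$ under its first-slot-linear convention, a harmless slip over $\mathbb{R}$ but one that needs the conjugate over $\mathbb{C}$.
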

\begin{proof}
	For $1 \leq i \leq m,$ $f_i$ can be written as $f_i = f_{i1}e_1 + \cdots + f_{in}e_n,$ where $\{e_1,\ldots,e_n\}$ is an orthonormal basis for $\mathcal{H}.$ Consider the following system of $m$ equations in $n$ unknowns, namely, ${h}_1,{h}_2,\ldots,{h}_n.$
	\begin{align*}\label{eqn7}
		&f_{11} {h}_1 + \cdots +  f_{1n} {h}_n = \alpha\\
		&f_{21} {h}_1 + \cdots + f_{2n} {h}_n= \alpha\\
		&\qquad\qquad\vdots\\
		&f_{m1} {h}_1 + \cdots + f_{mn} {h}_n = \alpha.
	\end{align*}
	As $\{f_1,\ldots,f_m\}$ is linearly independent, the coefficient matrix of the above system has rank $m$ and so does the augmented matrix. Hence, the above system of equations has a solution and $h = \sum \limits_ {j=1}^n h_j e_j$ satisfies  $\langle f_i,h \rangle = \alpha \;\forall\,i=1,2,\ldots,m.$
\end{proof}
\begin{proof}[\textbf{Proof of Theorem \ref{thm3point5}}]
	Let $A$ denote the frame bound of $F.$  Assume that $E_1 \cap E_2 = \{0\}$ and  $\{f_i\}_{i \in \Gamma_2}$ is linearly independent. By Theorem \ref{prop3point2},  $S_{F}^{-1}F \in \Delta_{F}^{(1)}.$	Suppose $G= \left\{ \frac{1}{A}f_i +u_i \right\}_{i=1}^N \in \Delta_F^{(1)}.$ Using the linear independence of $\{f_i\}_{i \in \Gamma_2},$  we shall show that $u_i =0,\; 1 \leq i \leq N.$ Let $i \in \Gamma_1.$ From  (\ref{eqn3point5}), we have  $\lambda q_i\left|\left\langle f_i,\frac{1}{A} f_{i} + u_i \right\rangle \right| + (1-\lambda)q_i\|f_i\| \left\|\frac{1}{A} f_{i} + u_i\right\| \leq \frac{M}{A}.$ It is also shown in the proof of Theorem \ref{prop3point2} that $$ q_i \left( \lambda \left|\left\langle f_i,\frac{1}{A} f_{i} + u_i \right\rangle \right| + (1-\lambda)\|f_i\| \left\|\frac{1}{A} f_{i} + u_i\right\| \right) = q_i \left( \lambda \left|\frac{1}{A} \|f_i\|^2 + i \;Im\langle f_i,u_i \rangle \right| + (1-\lambda) \|f_i\|\sqrt{\frac{1}{A^2}\|f_i\|^2 + \|u_i\|^2} \right).$$
	Consequently, as $q_i\|f_i\|^2 =M,$ we obtain $ \lambda \left| \frac{M}{A} + i \;q_i Im \langle f_i,u_i \rangle \right| + (1- \lambda)\sqrt{\frac{M^2}{A^2} + q_{i}^2 \|f_i\|^2\|u_i\|^2}  \leq \frac{M}{A}.$ We note that $M>0$ implies that both $q_i$ and $\|f_i\|$ are positive. Further, if $\|u_i\| > 0,$ then $ \lambda \left| \frac{M}{A} + i \;q_i Im \langle f_i,u_i \rangle \right| + (1- \lambda)\sqrt{\frac{M^2}{A^2} + q_{i}^2 \|f_i\|^2\|u_i\|^2} > \frac{M}{A},$ which is absurd. Therefore, $u_i =0,\,\forall \, i \in \Gamma_1.$ Furthermore, from the proof of Theorem \ref{prop3point2}, we also have $\sum\limits_{i \in \Gamma_2}\langle f,u_i \rangle f_i =0,\,\forall \,f\in \mathcal{H}.$ The linear independence of  $\{f_i\}_{i \in \Gamma_2}$ then suggests that $u_i =0, \,\forall \, i \in \Gamma_2.$
	Conversely, assume that $\Delta_{F}^{(1)} = \{S_{F}^{-1}F\}.$ First, we shall prove that $\{f_i\}_{i\in \Gamma_2}$ is linearly independent. Suppose not. Then, there exist $\alpha_i,\, i \in \Gamma_2,$ not all zero, such that $\sum\limits_{i \in \Gamma_2} \alpha_i f_i =0.$ Let $f $ be a non-zero element in $\mathcal{H}$ and $U= \{u_i\}_{1 \leq i \leq N}$ be a sequence in $\mathcal{H},$ where  $u_i =0,\; i \in \Gamma_1$ and $u_i = \bar{\alpha_i}f,\,\forall i \in \Gamma_2.$  Clearly, for $ t \in \mathbb{R},$ $\sum\limits_{1 \leq i\leq N} \langle f,f_i \rangle tu_i=\sum\limits_{i \in \Gamma_2} \langle f,f_i \rangle tu_i = \left\langle f, \sum\limits_{i \in \Gamma_2} \alpha_i f_i \right\rangle tf  =0.$   So, $G =\left\{ \frac{1}{A}f_i +tu_i \right\}_{i=1}^N,\,t \in \mathbb{R}$ is a noncanonical dual of $F.$ Now, for $i \in \Gamma_1,\;  \lambda q_i \left| \left\langle f_i,\frac{1}{A}f_i +tu_i \right\rangle \right| + (1-\lambda)q_i \|f_i\|\left\|\frac{1}{A}f_i +tu_i \right\|  = \dfrac{M}{A}\;\forall\, t\in \mathbb{R}$ and for $ i \in \Gamma_2,\,\lambda q_i \left| \left\langle f_i,\frac{1}{A}f_i +tu_i \right\rangle \right| + (1-\lambda)q_i \|f_i\|\left\|\frac{1}{A}f_i +tu_i \right\|\to \dfrac{q_i \|f_i\|^2}{A} < \dfrac{M}{A} $ as $t \to 0.$ We shall choose $t_0 >0,$ small enough, such that $\lambda q_i \left| \left\langle f_i,\frac{1}{A}f_i +t_0 u_i \right\rangle \right| + (1-\lambda)q_i \|f_i\|\left\|\frac{1}{A}f_i +t_0 u_i \right\| < \dfrac{M}{A},\;\forall \, i\in \Gamma_2. $ Therefore, for the dual $G_0= \left\{\frac{1}{A}f_i + t_0 u_i  \right\}_{i=1}^N$ of $F$,  $\mathcal{A}_{\lambda P}^{(1)}(F,G_0) = \dfrac{M}{A} = \mathcal{A}_{\lambda P}^{(1)}(F,S_{F}^{-1}F)$  and thus we have arrived at a contradiction. Hence, $\{f_i\}_{i \in \Gamma_2}$ is linearly independent. \par
Next, we shall show that $E_1 \cap E_2 = \{0\}.$ Suppose $0 \neq f \in E_1 \cap E_2 .$ Then, $f$ can be written as $f= \sum\limits_{j \in \Gamma'_1 \subset \Gamma_1} c_{j} f_{j} = \sum\limits_{k \in \Gamma_2} d_k f_k, $ where $c_j \neq 0$ and $\{f_{j}\}_{j \in \Gamma_1'}$  is also linearly independent. As $\{ c_j f_j\}_{j \in \Gamma'_1}$ is also linearly  independent, by Lemma \ref{lem3point6}, there exists  $h\in \mathcal{H}$ such that $\langle  f_j, \bar{c}_j h \rangle <0,$ for all $ j \in \Gamma'_1.$ Now, let  $\tilde{U} =\{\tilde{u}_i\}_{i=1}^N,$ where
\[	\tilde{u}_i=\begin{cases}
	\bar{c}_i h, & \text{if $i \in \Gamma'_1$}\\
	0, & \text{if $i \in \Gamma_1 \setminus \Gamma'_1$}\\
	-\bar{d}_i h, & \text{if $i \in \Gamma_2$}.
\end{cases}
\]
Then for any $t\in \mathbb{R},\, \sum\limits_{1\leq i \leq N} \langle f,f_i \rangle t \Tilde{u}_i  =\sum\limits_{i \in \Gamma'_1} \langle f,c_if_i \rangle th - \sum\limits_{i \in \Gamma_2} \langle f,d_if_i \rangle th = \left\langle f, \sum\limits_{i \in \Gamma'_1}c_i f_i - \sum\limits_{i \in \Gamma_2} d_i f_i \right\rangle th =0.$ Therefore, $ \{\frac{1}{A}f_i + t\Tilde{u}_i\}_{i=1}^N$ is a noncanonical dual of $F,$ for any $t \in \mathbb{R}.$ As earlier, there exists $\tilde{t}_0 >0$ such that
$$  \lambda q_i \left| \left\langle f_i,\frac{1}{A}f_i + \tilde{t}_0 \Tilde{u}_i \right\rangle \right| + (1-\lambda)q_i \|f_i\|\left\|\frac{1}{A}f_i + \tilde{t}_0 \Tilde{u}_i \right\|  < \dfrac{M}{A}, \text{ for all } i \in \Gamma_2. $$
Further, for $t \in \mathbb{R},\,i \in \Gamma_1 \setminus \Gamma'_1 , \,$ we have $  \lambda q_i \left|\left\langle f_i, \frac{1}{A}f_i +t\Tilde{u}_i \right\rangle \right| + (1-\lambda)q_i\|f_i\| \left\|\frac{1}{A}f_i +t\Tilde{u}_i \right\|  = \dfrac{q_i \|f_i\|^2}{A} = \dfrac{M}{A}$ and for
$i \in \Gamma'_1 ,$
\small
\begin{align*}
 \lambda q_i \left|\left\langle\ f_i,\frac{1}{A}f_i +t\Tilde{u}_i \right\rangle \right| + (1-\lambda)q_i\|f_i\|\left\|\frac{1}{A}f_i +t\Tilde{u}_i \right\|  &=  \left(\lambda  \left| \frac{M}{A} + q_i t \langle f_i,\Tilde{u}_i \rangle   \right| + (1-\lambda)q_i\|f_i\| \sqrt{\frac{\|f_i\|^2}{A^2} + t^2\|\Tilde{u}_i\|^2 + \frac{2t}{A}Re\langle f_i,\Tilde{u}_i\rangle}\right) \\&= \lambda \left| \frac{M}{A} + q_it \langle f_i,\Tilde{u}_i \rangle   \right| + (1-\lambda)\sqrt{\frac{M^2}{A^2} + {q_i}^2 t^2 \|f_i\|^2 \|\Tilde{u}_i\|^2 +  \frac{2q_i tM \langle f_i,\Tilde{u}_i \rangle}{A} } ,
\end{align*}
as $\langle  f_i, \Tilde{u}_i \rangle <0.$
\normalsize
The negative value of $\langle  f_i, \Tilde{u}_i \rangle $ allows us to choose $t'_i >0$ small enough such that $\left| \dfrac{M}{A} + q_i t'_i \langle f_i,\Tilde{u}_i \rangle   \right| < \dfrac{M}{A}.$ We may also choose $t''_i >0$ small enough such that $q_i t''_i \|f_i\|^2\|\Tilde{u}_i\|^2 + \dfrac{2M \langle f_i,\Tilde{u}_i\rangle}{A}  <0,$ which implies that $\dfrac{M^2}{A^2} + {q_i}^2 {t''_i}^2 \|f_i\|^2 \|\Tilde{u}_i\|^2 +  \dfrac{2q_i t''_i M \langle f_i,\Tilde{u}_i \rangle}{A}  < \dfrac{M^2}{A^2}.$ Taking $t_i = \min\{t'_i,t''_i\}, $ we obtain  $\lambda q_i \left| \left\langle f_i,\frac{1}{A}f_i +t_i \Tilde{u}_i \right\rangle \right|+ (1-\lambda)q_i \|f_i\| \left\|\frac{1}{A}f_i + t_i \Tilde{u}_i \right\| < \dfrac{M}{A}.$ Furthermore, taking $t_1 = \min \{t_i : i \in \Gamma'_1\},$ we get   $$\lambda q_i \left|\left\langle f_i,\frac{1}{A}f_i +t_1 \Tilde{u}_i \right\rangle \right|+ (1-\lambda)q_i \|f_i\|\,\left\|\frac{1}{A}f_i +t_1 \Tilde{u}_i \right\|  < \frac{M}{A}, \text{ for all } i \in \Gamma'_1.$$
If we choose  $\Tilde{t} = min \{ \tilde{t}_0,t_1\},$ we obtain a dual $\Tilde{G}= \left\{ \frac{1}{A}f_i + \Tilde{t}\Tilde{u}_i\\\\\right\}_{i=1}^N$ of $F$ for which $\mathcal{A}_{\lambda P}^{(1)}(F,\Tilde{G}) = \frac{M}{A},$ thereby contradicting the hypothesis.
\end{proof}

\section{Relations between $POD,\,PSOD$ and $PA_{SO}OD$ }
In this section, we analyse certain relations between the three types of probabilistic optimal dual frames discussed so far, namely $POD,\,PSOD$ and $PA_{SO}OD.$

\begin{thm}\label{thm3point4}
	Let $F = \{f_i\}_{i=1}^N $ be a tight frame for $\mathcal{H}$. Let  $\{q_i\}_{i=1}^N $ be a weight number sequence and  $0\le \lambda < 1.$ Then the following are equivalent.
	
	\item (i) $S_{F}^{-1} F$ is a $1-$erasure $PA_{SO}OD$ of $F.$
	\item (ii) $S_{F}^{-1} F$ is a $1-$erasure $POD$ of $F.$
	\item (iii) $S_{F}^{-1} F$ is a $1-$erasure $PSOD$ of $F.$

\end{thm}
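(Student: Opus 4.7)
The plan hinges on the observation that, for a tight frame $F$ with bound $A$, the canonical dual satisfies $S_F^{-1}f_i=A^{-1}f_i$ for every $i$, so $\langle f_i,S_F^{-1}f_i\rangle=\|f_i\|^2/A=\|f_i\|\,\|S_F^{-1}f_i\|$. By Proposition~\ref{prop3point1} (and the analogous identities for the operator norm and spectral radius recalled in Section~2), the three measures $\mathcal{O}_P^{(1)}(F,S_F^{-1}F)$, $r_P^{(1)}(F,S_F^{-1}F)$, and $\mathcal{A}_{\lambda P}^{(1)}(F,S_F^{-1}F)$ all collapse to $M/A$, where $M:=\max_{1\le i\le N}q_i\|f_i\|^2$. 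Thus each of (i), (ii), (iii) is equivalent to the infimum of the corresponding measure over all duals of $F$ equalling $M/A$, and I will phrase the three conditions this way throughout.

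The Cauchy--Schwarz inequality $|\langle f_i,g_i\rangle|\le\|f_i\|\,\|g_i\|$ gives the pointwise chain $r_P^{(1)}(F,G)\le\mathcal{A}_{\lambda P}^{(1)}(F,G)\le\mathcal{O}_P^{(1)}(F,G)$ for every dual $G$ of $F$. Taking infima yields $r_P^{(1)}(F)\le\mathcal{A}_{\lambda P}^{(1)}(F)\le\mathcal{O}_P^{(1)}(F)\le M/A$, and the two implications $(iii)\Rightarrow(i)$ and $(i)\Rightarrow(ii)$ fall out at once: if (iii) holds, then $\mathcal{A}_{\lambda P}^{(1)}(F)\ge r_P^{(1)}(F)=M/A$, which combined with $\mathcal{A}_{\lambda P}^{(1)}(F)\le M/A$ forces equality and gives (i); the step $(i)\Rightarrow(ii)$ is symmetric using the right-hand inequality.

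The nontrivial direction is $(ii)\Rightarrow(iii)$, where I expect the main work, since the chain above points the wrong way. My plan is a contrapositive perturbation argument in the spirit of Theorems~\ref{prop3point2} and~\ref{thm3point5}. Assume $r_P^{(1)}(F)<M/A$ and pick a dual $G=\{S_F^{-1}f_i+u_i\}_{i=1}^N$ with $r_P^{(1)}(F,G)<M/A$. For each $i\in\Gamma_1$ (with $\Gamma_1,\Gamma_2$ as defined before Theorem~\ref{prop3point2}), the witness forces $|\,\|f_i\|^2/A+\langle f_i,u_i\rangle|<\|f_i\|^2/A$; squaring shows $2(\|f_i\|^2/A)\operatorname{Re}\langle f_i,u_i\rangle+|\langle f_i,u_i\rangle|^2<0$, so $\operatorname{Re}\langle f_i,u_i\rangle<0$ strictly (and in particular $u_i\neq 0$) for every $i\in\Gamma_1$.

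The final step is to convert this spectral-radius gain into an operator-norm gain by moving a small distance towards the canonical dual: set $G_t:=\{S_F^{-1}f_i+tu_i\}_{i=1}^N$, still a dual for every $t\in\mathbb{R}$. Expanding $\|S_F^{-1}f_i+tu_i\|^2=\|f_i\|^2/A^2+t^2\|u_i\|^2+2t\operatorname{Re}\langle f_i,u_i\rangle/A$, the strict negativity of $\operatorname{Re}\langle f_i,u_i\rangle$ forces this quantity strictly below $\|f_i\|^2/A^2$ for all sufficiently small $t>0$ and every $i\in\Gamma_1$, hence $q_i\|f_i\|\,\|S_F^{-1}f_i+tu_i\|<q_i\|f_i\|^2/A=M/A$. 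For $i\in\Gamma_2$, where $q_i\|f_i\|^2<M$, continuity in $t$ yields the same strict bound on a (possibly smaller) neighbourhood of $t=0$. Choosing a single sufficiently small $t>0$ therefore produces $\mathcal{O}_P^{(1)}(F,G_t)<M/A$, contradicting (ii) and closing the cycle. The main obstacle is really this uniform choice of $t$, handled by minimising over the finitely many indices in $\{1,\ldots,N\}$.
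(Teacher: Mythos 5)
Your proposal is correct and follows essentially the same route as the paper: the easy implications come from the pointwise chain $r_P^{(1)}(F,G)\le\mathcal{A}_{\lambda P}^{(1)}(F,G)\le\mathcal{O}_P^{(1)}(F,G)$ together with the collapse of all three measures to $M/A$ at the canonical dual, and the hard step uses the same $\Gamma_1,\Gamma_2$ decomposition, the same extraction of $\operatorname{Re}\langle f_i,u_i\rangle<0$ from a strictly better competitor, and the same rescaling $G_t=\{S_F^{-1}f_i+tu_i\}$ to beat the canonical dual in operator norm. The only difference is organizational: by arranging the implications as the cycle $(iii)\Rightarrow(i)\Rightarrow(ii)\Rightarrow(iii)$ you need the perturbation argument once, whereas the paper proves $(i)\Leftrightarrow(ii)$ and $(i)\Leftrightarrow(iii)$ separately and runs it twice.
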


\begin{proof}
	Let $F$ be a tight frame with bound $A.$ First we shall prove the equivalence of (i) and (ii).
	\par Suppose the canonical dual $S_{F}^{-1} F = \left\{\frac{1}{A} f_{i}\right\}_{i=1}^N \in \Delta_{F}^{(1)}.$ Let $G= \{g_i\}_{i=1}^N$ be any dual of $F.$ Then, using \eqref{equationON5}, we get
	\begin{align*}\label{eqn4point1}
		\mathcal{O}_P^{(1)}(F,S_{F}^{-1} F)= \max_{1 \leq i \leq N} q_i\|f_i\| \left\|\tfrac{1}{A} f_{i}\right\| \nonumber&= \max_{1 \leq i \leq N} \bigg\{\lambda q_i|\langle f_i,\tfrac{1}{A} f_{i} \rangle | + (1-\lambda)q_i\|f_i\| \left\|\tfrac{1}{A} f_{i}\right\| \bigg\} \nonumber \\&= \mathcal{A}_{\lambda P}^{(1)}(F,\tfrac{1}{A}F) \nonumber \\&\leq \mathcal{A}_{\lambda P}^{(1)}(F,G) \nonumber \\&=  \max_{1 \leq i \leq N} \bigg\{\lambda q_i|\langle f_i,g_i \rangle | + (1-\lambda)q_i\|f_i\|\; \| g_{i}\| \bigg\} \nonumber \\& \leq \mathcal{O}_P^{(1)}(F,G)
	\end{align*}
	and so $S_{F}^{-1} F $ is a  $1-$erasure $POD$ of $F.$
	
	\par In order to prove the converse, we assume that  $S_{F}^{-1} F $ is a $1-$erasure $POD$ of $F$ but  $S_{F}^{-1} F \notin \Delta_{F}^{(1)}.$ Then, there exists a dual $G = \left\{ \tfrac{1}{A} f_{i} + u_i \right\}_{i=1}^N$ of $F$ such that $\mathcal{A}_{\lambda P}^{(1)}(F,G) < \mathcal{A}_{\lambda P}^{(1)}(F,S_{F}^{-1}F).$ Let $M, \Gamma_1$ and $\Gamma_2$ be as in Theorem \ref{prop3point2}. For $i \in \Gamma_1,$ we have
	\begin{align*}
		&\lambda \left| \frac{q_i}{A} \|f_{i}\|^2 + q_i\langle f_i, u_i\rangle \right| + (1-\lambda)\sqrt{\frac{q_i^2}{A^2}\|f_i\|^4 + q_i^2 \|f_i\|^2\|u_i\|^2 + \frac{2}{A}q_i^2\|f_i\|^2 Re \langle f_i, u_i\rangle}  \\&= \lambda q_i\left|\langle f_i,\frac{1}{A} f_{i} + u_i \rangle \right| + (1-\lambda)q_i\|f_i\|\; \left\|\tfrac{1}{A} f_{i} +u_i\right\|  \\& \leq \max\limits_{1 \leq j \leq N} \lambda q_j\left|\langle f_j,\frac{1}{A} f_{j} + u_j \rangle \right| + (1-\lambda)q_j\|f_j\|\; \left\|\tfrac{1}{A} f_{j} +u_j\right\|  \\&< \max\limits_{1 \leq j \leq N} \frac{q_j \|f_j\|^2}{A},
	\end{align*}
	which implies that \,  $\lambda \left| \frac{M}{A}  + q_i\langle f_i, u_i\rangle \right| + (1-\lambda)\sqrt{\frac{M^2}{A^2} + q_i M\|u_i\|^2 + \frac{2q_i}{A} M  Re \langle f_i, u_i\rangle} < \dfrac{M}{A}.$ From this inequality, we may conclude that $Re \langle f_i, u_i \rangle < 0.$ Consequently, for each  $i \in \Gamma_{1},$ we can choose $\epsilon_{1}^{(i)} > 0,$ small enough such that $\epsilon_{1}^{(i)}\|u_i\|^2 + \frac{2}{A} \text{Re}\langle f_i, u_i\rangle < 0.$ Taking $\epsilon_{1} \leq \min \limits_{i \in \Gamma_1}\epsilon_{1}^{(i)},$ we get for all  $i \in \Gamma_{1},$
	\begin{align*}
		q_i\|f_i\|\;\left\|\frac{1}{A}f_i + \epsilon_{1} u_i \right\| =q_i\sqrt{\frac{1}{A^2}\|f_i\|^4 + \epsilon_{1} \|f_i\|^2 \left(\epsilon_{1} \|u_i\|^2 + \frac{2}{A} Re \langle f_i, u_i\rangle \right)} \;\;<\; \frac{q_i \|f_i\|^2}{A} = \frac{M}{A}.
	\end{align*}

\noindent
Now, for each $j \in \Gamma_{2},\, M^2 - q_j^2 \|f_j\|^4 >0.$ Then, by the continuity of $q_j^2 \|f_j\|^2 t \left( t\|u_j\|^2 + \frac{2}{A} Re \langle f_j,u_j \rangle  \right)$ at $t=0,$ there exists  $\epsilon_{2}^{(j)} > 0$ such that $ q^2_j \| f_j\|^2 t \left(t\|u_j\|^2 + \frac{2}{A}  \text{Re} \langle f_j, u_j \rangle \right) < \frac{M^2 - q^2_j \|f_j\|^4}{A^2} ,\;\forall t < \epsilon_{2}^{(j)}. $ Then,
\begin{align*}
	q_j \|f_j\|\;\|\tfrac{1}{A}f_j + \epsilon_{2} u_j \| =\sqrt{\frac{q_j^{2}}{A^2}\|f_j\|^4 + q_j^{2} \|f_j\|^2 \epsilon_{2} \left(\epsilon_{2} \|u_j\|^2 + \frac{2}{A} \text{Re} \langle f_j, u_j \rangle \right)} \;<\; \frac{M}{A},
\end{align*}
where $\epsilon_{2} = \min\limits_{j \in \Gamma_2} \epsilon_{2}^{(j)}.$ Therefore, by taking $\epsilon < \displaystyle{\min \;\left\{\epsilon_{1}, \epsilon_{2} \right\}},$ we have  $\max\limits_{1 \leq i \leq N}  q_i \|f_i\| \; \left\|\tfrac{1}{A}f_i + \epsilon u_i \right\| < \frac{M}{A}.$ Hence, for the dual $G_{\epsilon} = \left\{\frac{1}{A}f_i + \epsilon u_i    \right\}_{i=1}^N,\;\;   \mathcal{O}_P^{(1)}(F,G_{\epsilon}) <   \mathcal{O}_P^{(1)}(F,S_{F}^{-1} F)$ holds, which is a contradiction to the fact that  $S_{F}^{-1} F $ is a $1-$erasure $POD$ of $F.$ Therefore, $S_{F}^{-1} F  \in  \Delta_{F}^{(1)}.$ \\

Next, we shall show the equivalence of (i) and (iii). If $S_{F}^{-1} F$ is a $1-$erasure $PSOD$ of $F,$ then for any dual  $G = \left\{\frac{1}{A}f_i +u_i\right\}_{i=1}^N$ of $F$ in $\mathcal{H},$
\begin{align*}
	\mathcal{A}_{\lambda P}^{(1)}(F,G) &= \max_{1 \leq i \leq N} \lambda q_i \left|\langle f_i,\tfrac{1}{A} f_{i} + u_i\rangle \right| + (1-\lambda)q_i \|f_i\|\; \left\|\tfrac{1}{A} f_{i} +u_i\right\|  \\& \geq  \max_{1 \leq i \leq N} \;q_i |\langle f_i,\tfrac{1}{A} f_{i} + u_i\rangle | \\&=  r_{P}^{(1)}(F,G) \\&\geq  r_{P}^{(1)}(F,S_{F}^{-1}F) \\&= \max_{1 \leq i \leq N} \frac{q_i \|f_i\|^2}{A}  \\&=  \mathcal{A}_{\lambda P}^{(1)}(F,S_{F}^{-1}F) .
\end{align*}
Hence, $S_{F}^{-1}F \in \Delta_{F}^{(1)}.$
\par Conversely, suppose $S_{F}^{-1} F \in \Delta_{F}^{(1)} $  and $G = \left\{\frac{1}{A}f_i +u_i\right\}_{i=1}^N$ is a noncanonical dual of $F$ such that $r_{P}^{(1)}(F,G) < r_{P}^{(1)}(F, S_{F}^{-1} F)$ i.e., $\max\limits_{1 \leq i \leq N} \;\; q_i \left|\langle f_i,\tfrac{1}{A} f_{i} +u_i \rangle \right| < \max\limits_{1 \leq i \leq N} \;\; q_i \left|\langle f_i,\tfrac{1}{A} f_{i} \rangle \right| .$ Let  $M ,\Gamma_{1}$ and $\Gamma_{2} $ be  as in Theorem \ref{prop3point2}. We then have for all $ i \in \Gamma_1,\;  q_i \left|\langle f_i,\tfrac{1}{A} f_{i} +u_i \rangle \right| < \frac{M}{A}.$ So, $  \left|\frac{q_i}{A}\|f_i\|^2 + q_i\langle f_i,u_i \rangle \right| < \frac{q_i}{A} \|f_i\|^2,$ which leads to the implication $Re \langle f_i , u_i \rangle <0.$ As in the proof of equivalence of (i) and (ii), we now choose $\epsilon >0 $ such that
\begin{align*}
	\max \limits_{1 \leq i \leq N} \bigg\{ \lambda q_i |\langle f_i,\tfrac{1}{A} f_{i} + \epsilon u_i\rangle | + (1-\lambda)q_i \|f_i\|\; \left\|\tfrac{1}{A} f_{i} + \epsilon u_i\right\| \bigg\}  \leq \max \limits_{1 \leq i \leq N} q_i \|f_i\|\; \left\|\tfrac{1}{A} f_{i} + \epsilon u_i\right\|  < \frac{M}{A} .
\end{align*}
Consequently, we have a dual, $G_{\epsilon}= \left\{\frac{1}{A}f_i + \epsilon u_i\right\}_{i=1}^N ,$ for which $ \mathcal{A}_{\lambda P}^{(1)}(F,G_{\epsilon}) < \mathcal{A}_{\lambda P}^{(1)}(F,S_{F}^{-1}F).$ Thus, we have arrived at a contradiction, thereby proving that $S_{F}^{-1} F$ is a $1-$erasure $PSOD$ of $F.$
\end{proof}

\begin{cor}
	Let $F$ be a tight frame for $\mathcal{H}.$ Let  $\{q_i\}_{i=1}^N $ be a weight number sequence and  $0\le \lambda\le 1.$ If $\Delta_{F}^{(1)} = \left\{S_{F}^{-1}F \right\},$ then $S_{F}^{-1}F$ is the only $1-$erasure $POD$ of $F.$
\end{cor}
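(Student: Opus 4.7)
The plan is to combine Theorem \ref{thm3point4} with a Cauchy--Schwarz comparison between the two optimality measures. By hypothesis $\Delta_F^{(1)} = \{S_F^{-1}F\}$, so in particular $S_F^{-1}F \in \Delta_F^{(1)}$. Applying Theorem \ref{thm3point4} (with the value $\lambda$ at hand if $\lambda<1$, and noting that the case $\lambda=1$ can be handled separately via $PSOD$ or a direct check), we get that $S_F^{-1}F$ is a $1$-erasure $POD$ of $F$. So the existence part of the claim is immediate; the real content is uniqueness.

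For uniqueness, let $A$ be the tight frame bound, set $M := \max_{1\le i\le N} q_i\|f_i\|^2$, and let $G = \{g_i\}_{i=1}^N$ be any $1$-erasure $POD$ of $F$. Then
\[
\mathcal{O}_P^{(1)}(F,G) \;=\; \mathcal{O}_P^{(1)}(F, S_F^{-1}F) \;=\; \max_{1\le i\le N} \tfrac{q_i \|f_i\|^2}{A} \;=\; \tfrac{M}{A},
\]
so $q_i\|f_i\|\,\|g_i\| \le M/A$ for every $i$. Invoking Cauchy--Schwarz, $|\langle f_i,g_i\rangle| \le \|f_i\|\,\|g_i\|$, and hence for each $i$,
\[
q_i\bigl(\lambda |\langle f_i,g_i\rangle| + (1-\lambda)\|f_i\|\,\|g_i\|\bigr) \;\le\; q_i\|f_i\|\,\|g_i\| \;\le\; \tfrac{M}{A}.
\]
Taking the maximum over $i$ and using Proposition \ref{prop3point1}, I obtain $\mathcal{A}_{\lambda P}^{(1)}(F,G) \le M/A$.

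On the other hand, for the canonical dual itself, $\langle f_i, A^{-1}f_i\rangle = A^{-1}\|f_i\|^2 \ge 0$ and $\|A^{-1}f_i\| = A^{-1}\|f_i\|$, so $\mathcal{A}_{\lambda P}^{(1)}(F,S_F^{-1}F) = M/A$. Since $S_F^{-1}F \in \Delta_F^{(1)}$, this common value equals $\mathcal{A}_{\lambda P}^{(1)}(F)$. Therefore $\mathcal{A}_{\lambda P}^{(1)}(F,G) \le \mathcal{A}_{\lambda P}^{(1)}(F)$, forcing equality, i.e.\ $G \in \Delta_F^{(1)}$. The hypothesis $\Delta_F^{(1)} = \{S_F^{-1}F\}$ then yields $G = S_F^{-1}F$, finishing the uniqueness.

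The only delicate point is the sanity check that $\mathcal{A}_{\lambda P}^{(1)}(F, S_F^{-1}F)$ genuinely coincides with $\mathcal{O}_P^{(1)}(F, S_F^{-1}F) = M/A$ for a tight frame (so that the upper bound $M/A$ derived from Cauchy--Schwarz really is attained by the canonical dual); this is what makes the Cauchy--Schwarz sandwich tight and lets the POD-optimality of $G$ transfer to $PA_{SO}OD$-optimality. Everything else is bookkeeping with the formula from Proposition \ref{prop3point1} and the hypothesis on $\Delta_F^{(1)}$.
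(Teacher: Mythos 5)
Your argument is correct and is essentially the paper's own proof: the paper also derives $POD$-optimality of $S_F^{-1}F$ from Theorem \ref{thm3point4}, and for uniqueness runs the same chain $\mathcal{O}_P^{(1)}(F,G) \geq \mathcal{A}_{\lambda P}^{(1)}(F,G)$ together with $\mathcal{A}_{\lambda P}^{(1)}(F,S_F^{-1}F) = \mathcal{O}_P^{(1)}(F,S_F^{-1}F)$, merely phrased contrapositively (showing a noncanonical dual has strictly larger $\mathcal{O}_P^{(1)}$) rather than showing an arbitrary $POD$ lands in $\Delta_F^{(1)}$. Your extra care about $\lambda=1$ is reasonable but unnecessary, since the implication $(i)\Rightarrow(ii)$ of Theorem \ref{thm3point4} that is actually invoked holds for all $\lambda\in[0,1]$.
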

\begin{proof}
	If $\Delta_{F}^{(1)} = \left\{S_{F}^{-1}F \right\},$ then, by the proof of the implication $(i)\Rightarrow (ii)$ of Theorem \ref{thm3point4}, $S_{F}^{-1}F$ is a $1-$erasure $POD$ of $F.$ Further, for any noncanonical dual $G$ of $F,$
	\begin{align*}
		\mathcal{O}_P^{(1)}(F,G) \geq \mathcal{A}_{\lambda P}^{(1)}(F,G) > \mathcal{A}_{\lambda P}^{(1)}(F,S_{F}^{-1}F) = \mathcal{O}_P^{(1)}(F,S_{F}^{-1}F),
	\end{align*}
	which shows that no noncanonical dual can be a $1-$erasure $POD$ of $F.$
	
\end{proof}

\noindent
In the case of a general frame, it is possible that the canonical dual frame is none of the optimal dual frames mentioned above, as shown by the example below.

\begin{example}
	Let $ \mathcal{H}= {\mathbb{R}}^2,$  frame $F= \{f_1,f_2,f_3\},$ where $f_1 = \left[\begin{array}{l}
		1 \\0
	\end{array}\right] $,
	$f_2 = \left[\begin{array}{l}
		0 \\ 1
	\end{array}\right] ,$
	$ f_3 = \left[\begin{array}{l}
		1 \\ 1
	\end{array}\right] $ and   $\{q_i\}_{i=1}^3 = \left\{ 2, \frac{3}{2}, \frac{6}{5} \right\}$ be the weight number sequence. It is easy to show that
	$S_F= \begin{bmatrix}
		2 & 1\\
		1 & 2
	\end{bmatrix} $ and the canonical dual is
	$$S_{F}^{-1} F = \left\{ \left[\begin{array}{r} \frac{2}{3} \\ - \frac{1}{3} \end{array}\right],  \left[\begin{array}{r} -\frac{1}{3} \\ \frac{2}{3} \end{array}\right], \left[\begin{array}{l} \frac{1}{3} \\ \frac{1}{3}  \end{array}\right] \right\} .$$
	\noindent We then compute the following:
	$q_1 \langle f_1 , S_{F}^{-1}f_1 \rangle = \frac{4}{3},q_2 \langle f_2 , S_{F}^{-1}f_2 \rangle = 1, q_3 \langle f_3 , S_{F}^{-1}f_3 \rangle = \frac{4}{5} \; \text{and}\; q_1 \|f_1\|\left\| S_{F}^{-1}f_1 \right\| = \frac{2\sqrt{5}}{3} ,\;q_2 \|f_2\|\left\| S_{F}^{-1}f_2 \right\| = \frac{\sqrt{5}}{2},\;q_3 \|f_3\|\left\| S_{F}^{-1}f_3\right\| = \frac{4}{5}.$ Therefore, $r_{P}^{(1)}( F, S_{F}^{-1}F) = \frac{4}{3}$, $\mathcal{O}_{P}^{(1)}( F, S_{F}^{-1}F) = \frac{2\sqrt{5}}{3}. $
	\begin{align*}
		\mathcal{A}_{\lambda P}^{(1)}( F, S_{F}^{-1}F) =\max \left\{2\left(\lambda \frac{2}{3} +(1-\lambda)\frac{\sqrt{5}}{3} \right), \frac{3}{2}\left(\lambda \frac{2}{3} +(1-\lambda)\frac{\sqrt{5}}{3} \right), \frac{6}{5}\left(\lambda \frac{2}{3} +(1-\lambda)\frac{2}{3} \right) \right\} = \lambda \frac{4}{3} +(1-\lambda)\frac{2\sqrt{5}}{3}
	\end{align*}
	
	 One can easily verify that the set of duals of $F$ is of the form \\
	$$D = \left\{ \left[\begin{array}{r} \frac{2}{3} + \alpha  \\ - \frac{1}{3} +\beta \end{array}\right], \left[\begin{array}{r} - \frac{1}{3} +\alpha  \\  \frac{2}{3} +\beta   \end{array}\right], \left[\begin{array}{l}\frac{1}{3} - \alpha \\ \frac{1}{3} - \beta \end{array}\right]  \right\}, \text{where}\; \alpha, \beta \in \mathbb{R}.$$
	Then, $r_{P}^{(1)}( F,G) = \max \left\{2 \left| \frac{2}{3} + \alpha \right|, \frac{3}{2}\left|\frac{2}{3} + \beta \right|, \frac{6}{5}\left|\frac{2}{3} - \alpha-\beta \right|\right\}.$ In particular, if we choose $\alpha = \beta = -\frac{1}{6},$ we deduce that $r_{P}^{(1)}( F,G) = \frac{6}{5}$ and hence, the canonical dual is not a $1-$erasure $PSOD$.
	Similarly, $\mathcal{O}_{P}^{(1)}( F,G) = \max \left\{ 2\sqrt{\left(\frac{2}{3} + \alpha\right)^2 + \left(\frac{1}{3} - \beta \right)^2}, \frac{3}{2}\sqrt{\left(\frac{1}{3} - \alpha\right)^2 + \left(\frac{2}{3} + \beta \right)^2}, \frac{6\sqrt{2}}{5}\sqrt{\left(\frac{1}{3} - \alpha \right)^2 + \left(\frac{1}{3} - \beta \right)^2}  \right\}$ and for the above choice of $\alpha$ and $\beta,$ we get $\mathcal{O}_{P}^{(1)}( F,G) = \sqrt{2}.$ So, the canonical dual is not a $1-$erasure $POD$.
	Also,
\begin{align*}
 \mathcal{A}_{\lambda P}^{(1)}( F,G) &= \max \left\{2\left(\lambda \frac{1}{2} +(1-\lambda)\frac{1}{\sqrt{2}} \right), \frac{3}{2}\left(\lambda \frac{1}{2} +(1-\lambda)\frac{1}{\sqrt{2}} \right), \frac{6}{5}\left(\lambda  +(1-\lambda)\right) \right\} \\ &= \left\{\begin{array}{lll}
\lambda +\sqrt{2}(1-\lambda) & \mbox{ if } & 0\le \lambda\le \frac{5\sqrt{2}-6}{5(\sqrt{2}-1)}\\
\frac{6}{5}& \text{ if } & \frac{5\sqrt{2}-6}{5(\sqrt{2}-1)}\le \lambda \le 1.
\end{array}
\right.\\
\end{align*}

	Therefore, $\lambda \frac{4}{3} +(1-\lambda)\frac{2\sqrt{5}}{3} > \lambda +\sqrt{2}(1-\lambda)$ for $ 0\le \lambda\le \frac{5\sqrt{2}-6}{5(\sqrt{2}-1)}$  and hence the canonical dual is not a $1-$erasure $PA_{SO}OD$ either.
	
\end{example}

\noindent We now derive another relation between $PSOD$ and $PA_{SO}OD.$
\begin{thm}
	For a frame $F=\{f_i\}_{i=1}^N$ in $\mathcal{H}.$  Let  $\{q_i\}_{i=1}^N $ be a weight number sequence and  $0\le \lambda\le 1.$ Suppose $S_{F}^{-1}F$ is a $1-$erasure $PSOD$. Then, $S_{F}^{-1/2}F \in \Delta_{S_{F}^{-1/2}F}^{(1)}.$
\end{thm}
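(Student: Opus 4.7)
The plan is to apply Theorem \ref{thm3point4} to the associated canonical Parseval frame $\tilde F := S_F^{-1/2}F = \{S_F^{-1/2}f_i\}_{i=1}^N$. A routine computation shows that the frame operator $S_{\tilde F}$ equals the identity, so $\tilde F$ is its own canonical dual, and the desired conclusion $S_F^{-1/2}F \in \Delta_{S_F^{-1/2}F}^{(1)}$ is precisely the statement that the canonical dual of the tight frame $\tilde F$ is a $1$-erasure $PA_{SO}OD$ of $\tilde F$. By Theorem \ref{thm3point4}, for $0 \le \lambda < 1$ this is equivalent to $\tilde F$ being a $1$-erasure $PSOD$ of $\tilde F$; for $\lambda = 1$, the $PA_{SO}OD$ and $PSOD$ measures coincide, so the same reduction works trivially. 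Thus it suffices to prove that $\tilde F$ is a $1$-erasure $PSOD$ of itself, using only the hypothesis that $S_F^{-1}F$ is a $1$-erasure $PSOD$ of $F$.

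To transfer the hypothesis, I would first set up a bijection between duals of $F$ and duals of $\tilde F$: the sequence $\tilde G = \{S_F^{1/2}g_i\}_{i=1}^N$ is a dual of $\tilde F$ if and only if $G = \{g_i\}_{i=1}^N$ is a dual of $F$. This is verified by a direct substitution in the reconstruction identity $f = \sum_i \langle f, \tilde f_i\rangle \tilde g_i$, using the self-adjointness of $S_F^{1/2}$ and the change of variable $h = S_F^{-1/2}f$. Under this correspondence, the canonical dual $S_F^{-1}F$ of $F$ is sent to $\tilde F$ itself, consistent with $\tilde F$ being its own canonical dual.

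The main technical step is to show that the spectral measure $r_P^{(1)}$ is preserved under this correspondence. For the pair $(\tilde F, \tilde G)$ with erasure at position $i$, the error operator is the rank-one map $f \mapsto q_i \langle f, S_F^{-1/2}f_i\rangle S_F^{1/2}g_i$, whose unique nonzero eigenvalue is $q_i \langle S_F^{1/2}g_i, S_F^{-1/2}f_i\rangle = q_i\langle g_i, f_i\rangle$ by self-adjointness of $S_F^{1/2}$. Hence $\rho(E_{\{i\},\tilde F,\tilde G}) = q_i|\langle f_i, g_i\rangle| = \rho(E_{\{i\},F,G})$, and taking the maximum over $i$ yields $r_P^{(1)}(\tilde F, \tilde G) = r_P^{(1)}(F, G)$.

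Putting everything together, the PSOD hypothesis on $F$ gives $r_P^{(1)}(F, S_F^{-1}F) \le r_P^{(1)}(F, G)$ for every dual $G$ of $F$; transferring via the bijection, $r_P^{(1)}(\tilde F, \tilde F) \le r_P^{(1)}(\tilde F, \tilde G)$ for every dual $\tilde G$ of $\tilde F$, so $\tilde F$ is a PSOD of $\tilde F$. Invoking Theorem \ref{thm3point4} (with the $\lambda = 1$ case handled directly as noted) then completes the proof. The only real obstacle is establishing the bijection $G \leftrightarrow S_F^{1/2}G$ and verifying that it preserves the spectral quantity $r_P^{(1)}$; once that identification is in place, all remaining steps are immediate applications of prior results.
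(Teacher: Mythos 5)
Your proof is correct, and it rests on the same computation that drives the paper's argument, namely that the correspondence $G \mapsto S_F^{1/2}G$ carries duals of $F$ bijectively onto duals of $\tilde{F} = S_F^{-1/2}F$ while preserving the spectral measure, since $q_i\langle S_F^{1/2}g_i, S_F^{-1/2}f_i\rangle = q_i\langle g_i, f_i\rangle$. Where you diverge is in how the conclusion is extracted: you first establish that $\tilde{F}$ is a $1$-erasure $PSOD$ of itself and then invoke Theorem \ref{thm3point4} (the equivalence of $PSOD$ and $PA_{SO}OD$ for the canonical dual of a tight frame) to upgrade this to $\tilde{F} \in \Delta_{\tilde{F}}^{(1)}$, handling $\lambda = 1$ separately since that theorem is stated only for $0 \le \lambda < 1$. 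The paper instead proves the required inequality in a single self-contained chain: for any dual $G = \{S_F^{-1/2}f_i + u_i\}_{i=1}^N$ of $\tilde{F}$ it bounds $\mathcal{A}_{\lambda P}^{(1)}(\tilde{F}, G) \ge \max_i q_i\left|\langle S_F^{-1/2}f_i, g_i\rangle\right| = r_P^{(1)}(F, G')$ with $G' = \{S_F^{-1}f_i + S_F^{-1/2}u_i\}_{i=1}^N$ (your correspondence read in the opposite direction), applies the $PSOD$ hypothesis to get $\ge r_P^{(1)}(F, S_F^{-1}F) = \max_i q_i\|S_F^{-1/2}f_i\|^2 = \mathcal{A}_{\lambda P}^{(1)}(\tilde{F}, \tilde{F})$, and is done. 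Your modular route buys reuse of an already-proved equivalence and makes the logical structure transparent; the paper's direct route needs only one direction of the dual correspondence, carries no dependence on Theorem \ref{thm3point4}, and treats all $\lambda \in [0,1]$ uniformly without a special case.
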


\begin{proof}
	Any dual $G = \{g_i\}_{i=1}^N$ of $S_{F}^{-1/2}F$ is of the form $\left\{S_{F}^{-1/2}f_i + u_i\right\}_{i=1}^N$ with $\sum \limits_{i=1}^N \langle f, S_{F}^{-1/2}f_i \rangle u_i  = 0,$ for all $f \in \mathcal{H},$ as $S_{F}^{-1/2}F$ is a Parseval frame for $\mathcal{H}.$ Now, consider
	\begin{align*}
		\mathcal{A}_{\lambda P}^{(1)}(S_{F}^{-1/2}F,G) &= \max_{1 \leq i \leq N}  \lambda q_i \left| \left\langle S_{F}^{-1/2}f_i,g_i \right\rangle \right| + (1-\lambda)q_i \left\| S_{F}^{-1/2}f_i \right\|\;\|g_i \| \\& \geq \max_{1 \leq i \leq N} q_i \left| \langle S_{F}^{-1/2}f_i,g_i \rangle \right| \\&= \max_{1 \leq i \leq N} q_i \left| \langle S_{F}^{-1/2}f_i, S_{F}^{-1/2}f_i +u_i \rangle \right| \\& = \max_{1 \leq i \leq N} q_i \left| \langle f_i, S_{F}^{-1}f_i + S_{F}^{-1/2}u_i \rangle \right| \\&= r_{P}^{(1)} (F,G'),
	\end{align*}
	where $G'= \left\{ S_{F}^{-1}f_i+S_{F}^{-1/2}u_i\right\}_{i=1}^N .$ We note that $G'$ is also a dual frame of $F$ for, $\sum \limits_{i=1}^N \langle f, f_i \rangle S_{F}^{-1/2}u_i = S_{F}^{-1/2} \left(\sum \limits_{i=1}^N \langle S_{F}^{1/2} f, S_{F}^{-1/2} f_i \rangle u_i\right) = 0,$ \,$f \in \mathcal{H}.$  As $S_{F}^{-1}F$ is a $1-$erasure $PSOD$ of $F,$ we obtain $r_{P}^{(1)} (F,G') \geq r_{P}^{(1)} (F,S_{F}^{-1}F)$ and hence,
	\begin{align*}
		\mathcal{A}_{\lambda P}^{(1)}(S_{F}^{-1/2}F,G)  \geq  r_{P}^{(1)} (F,S_{F}^{-1}F)= \max_{1 \leq i \leq N} q_i \| S_{F}^{-1/2}f_i\|^2 = \mathcal{A}_{\lambda P}^{(1)}(S_{F}^{-1/2}F,S_{F}^{-1/2}F) .
	\end{align*}
	Therefore, $S_{F}^{-1/2}F \in \Delta_{S_{F}^{-1/2}F}^{(1)}.$
\end{proof}

\section{Topological properties of $\Delta_{F}^{(1)}$}

Now, we shall look into some topological properties such as convexity, closedness and compactness of $\Delta_{F}^{(1)}.$   On $\mathcal{H}^{N},$ one can define several norms, which in fact are all equivalent. For $F\in \mathcal{H}^{N}, $ we take $\|F\|$ to denote the 2-norm, $\bigg(  \sum\limits_{i=1}^N \|f_i  \|^2 \bigg)^{\frac{1}{2}}.$

\begin{thm}
	Let $F = \{f_i\}_{i=1}^N $ be a frame for $\mathcal{H}$.  Let  $\{q_i\}_{i=1}^N $ be a weight number sequence and  $0\le \lambda\le 1.$ Then, the set $\Delta_{F}^{(1)} $ is a closed convex subset of $\mathcal{H}^N$.
\end{thm}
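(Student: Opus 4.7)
The plan is to establish both properties (closedness and convexity) by exploiting the explicit formula from Proposition \ref{prop3point1},
\[
\mathcal{A}_{\lambda P}^{(1)}(F,G)=\max_{1\le i\le N}\;q_i\bigl(\lambda|\langle f_i,g_i\rangle|+(1-\lambda)\|f_i\|\,\|g_i\|\bigr),
\]
which exhibits $G\mapsto \mathcal{A}_{\lambda P}^{(1)}(F,G)$ as a maximum of finitely many continuous and convex functions of $G\in\mathcal{H}^N$. If $\Delta_F^{(1)}=\emptyset$ the claim is trivial, so assume $\Delta_F^{(1)}\ne\emptyset$; in particular the infimum $\mathcal{A}_{\lambda P}^{(1)}(F)$ is attained.

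For convexity, let $G^{(1)}=\{g_i^{(1)}\}_{i=1}^N$ and $G^{(2)}=\{g_i^{(2)}\}_{i=1}^N$ lie in $\Delta_F^{(1)}$ and fix $t\in[0,1]$. I would first observe that $G=tG^{(1)}+(1-t)G^{(2)}$ is again a dual of $F$, since the defining identity $f=\sum_{i=1}^N\langle f,g_i\rangle f_i$ is affine in $\{g_i\}$. Next, applying the triangle inequality to $\langle f_i,\cdot\rangle$ and to $\|\cdot\|$ inside Proposition \ref{prop3point1} gives
\[
q_i\bigl(\lambda|\langle f_i,tg_i^{(1)}+(1-t)g_i^{(2)}\rangle|+(1-\lambda)\|f_i\|\,\|tg_i^{(1)}+(1-t)g_i^{(2)}\|\bigr)
\]
\[
\le t\,q_i\bigl(\lambda|\langle f_i,g_i^{(1)}\rangle|+(1-\lambda)\|f_i\|\,\|g_i^{(1)}\|\bigr)+(1-t)\,q_i\bigl(\lambda|\langle f_i,g_i^{(2)}\rangle|+(1-\lambda)\|f_i\|\,\|g_i^{(2)}\|\bigr).
\]
Taking the maximum over $i$ on both sides and using that both $G^{(j)}$ attain $\mathcal{A}_{\lambda P}^{(1)}(F)$, I obtain $\mathcal{A}_{\lambda P}^{(1)}(F,G)\le \mathcal{A}_{\lambda P}^{(1)}(F)$. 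The reverse inequality is automatic from the definition of the infimum, so $G\in\Delta_F^{(1)}$.

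For closedness, let $\{G^{(k)}\}_{k\ge1}\subset\Delta_F^{(1)}$ with $G^{(k)}\to G$ in the $2$-norm on $\mathcal{H}^N$. The dual-frame condition $\sum_{i=1}^N\langle f,g_i^{(k)}\rangle f_i=f$ for all $f\in\mathcal{H}$ is preserved under componentwise limits (the inner product is continuous and the sum is finite), so $G$ is a dual of $F$. Since for each $i$ the map $g_i\mapsto q_i(\lambda|\langle f_i,g_i\rangle|+(1-\lambda)\|f_i\|\,\|g_i\|)$ is continuous, and a finite maximum of continuous functions is continuous, $G\mapsto\mathcal{A}_{\lambda P}^{(1)}(F,G)$ is continuous on $\mathcal{H}^N$. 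Hence
\[
\mathcal{A}_{\lambda P}^{(1)}(F,G)=\lim_{k\to\infty}\mathcal{A}_{\lambda P}^{(1)}(F,G^{(k)})=\mathcal{A}_{\lambda P}^{(1)}(F),
\]
so $G\in\Delta_F^{(1)}$ and the set is closed.

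No step appears to be a genuine obstacle: the only subtlety is the bookkeeping that the dual-frame constraint is preserved both under convex combinations (affinity) and under limits (continuity), which is straightforward. The key ingredient powering both parts is the formula in Proposition \ref{prop3point1}, which immediately exhibits $\mathcal{A}_{\lambda P}^{(1)}(F,\cdot)$ as simultaneously convex and continuous.
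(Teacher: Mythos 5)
Your proof is correct and follows essentially the same route as the paper's: both use the explicit formula of Proposition \ref{prop3point1}, the triangle inequality to get convexity of $G\mapsto\mathcal{A}_{\lambda P}^{(1)}(F,G)$ together with the affinity of the dual-frame condition, and continuity of the same map to get closedness. Your explicit handling of the case $\Delta_F^{(1)}=\emptyset$ is a small extra care the paper omits, but otherwise the arguments coincide.
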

\begin{proof}
		Let $\{G^{(n)}\}_{n \in \mathbb{N}}$ be a sequence in $\Delta_{F}^{(1)}$ which converges to $G \in \mathcal{H}^{N}.$ 	In order to prove that $G =\{g_i\}_{i=1}^N$ is a dual frame of $F,$ it is enough to show that $\displaystyle{\sum_{i=1}^N \langle f,g_i \rangle f_i = f}, \forall f \in \mathcal{H},$ which can be verified easily. Now, $\mathcal{A}_{\lambda P}^{(1)} (F) = \mathcal{A}_{\lambda P}^{(1)} (F,G^{(n)}) = \max\limits_{1 \leq i \leq N} {q_i\bigg( \lambda | \langle f_i,g^{(n)}_i \rangle | + (1-\lambda) \|f_i\|\; \|g^{(n)}_i\|\bigg)}. $ Taking the limit as $n \to \infty,$ we get
	\begin{align*}
		\mathcal{A}_{\lambda P}^{(1)} (F) = \lim\limits_{n \to \infty}\max\limits_{1 \leq i \leq N} q_i\bigg(\lambda | \langle f_i,g^{(n)}_i \rangle | + (1-\lambda)\|f_i\|\; \|g^{(n)}_i\|\bigg) = \max\limits_{1 \leq i \leq N} q_i\bigg(\lambda | \langle f_i,g_i \rangle | + (1-\lambda)\|f_i\|\; \|g_i\|\bigg) = \mathcal{A}_{\lambda P}^{(1)} (F,G).
	\end{align*}
	Hence, $G \in \Delta_{F}^{(1)}$ and the set  $\Delta_{F}^{(1)}$ is closed.
	
	\par Suppose $ G= \{g_i\}_{i=1}^N,\;G'= \{g'_i\}_{i=1}^N \in \Delta_{F}^{(1)} .$  Let $\delta \in [0,1]$ and  $G'' = \delta G + (1-\delta) G'.$ It can be easily verified that $G''$ is a dual of $F.$	Further,
	\begin{align*}
		\mathcal{A}_{\lambda P}^{(1)}(F,G'') &= \max_{1 \leq i \leq N} \;\; q_i\bigg({ \lambda | \langle f_i,  \delta g_i +(1- \delta) g'_{i} \rangle | + (1-\lambda)\|f_i\|\; \| \delta g_i +(1- \delta) g'_{i}\|\bigg)} \\ & \leq \max_{1 \leq i \leq N} \;\; \left(\delta\; q_i\bigg({\lambda | \langle f_i, g_i  \rangle | + (1-\lambda)\|f_i\|\; \| g_i \|}\bigg) + (1-\delta)\; q_i\bigg({\lambda| \langle f_i, g'_i  \rangle | + (1-\lambda)\|f_i\|\; \| g'_i \|}\bigg) \right)  \\& \leq \delta \mathcal{A}_{\lambda P}^{(1)}(F,G) + (1-\delta)\mathcal{A}_{\lambda P}^{(1)}(F,G') \\&= \delta \mathcal{A}_{\lambda P}^{(1)}(F) + (1-\delta)\mathcal{A}_{\lambda P}^{(1)}(F) \\&= \mathcal{A}_{\lambda P}^{(1)}(F).
	\end{align*}
	Therefore,  $G'' \in \Delta_{F}^{(1)} $ and hence $\Delta_{F}^{(1)} $ is a convex set.
\end{proof}

\noindent In order to prove the compactness of $\Delta_{F}^{(1)},$ we make use of the following lemmas.

\begin{lem}\label{lemma3point1}
	Let $A=\{a_1,a_2,\cdots,a_N\}$ and $B=\{b_1,b_2,\cdots,b_N\}$ be two sets of real numbers. Then,\; $\left|\max\; A - \max\; B  \right| \leq \max\limits_{1 \leq k \leq N} \;|a_k - b_k|.$
\end{lem}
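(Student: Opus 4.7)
The plan is to reduce the inequality to the elementary fact that for each index $k$, $a_k \le |a_k - b_k| + b_k$. I would not split into cases unnecessarily; instead I would prove both directions of the absolute value by symmetry after establishing one direction.

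First, for every $k \in \{1, 2, \ldots, N\}$, write $a_k = (a_k - b_k) + b_k$. This gives
\[
a_k \le |a_k - b_k| + b_k \le \max_{1 \le j \le N} |a_j - b_j| + \max B.
\]
Since this holds for every $k$, taking the maximum over $k$ on the left yields $\max A \le \max_{1 \le k \le N} |a_k - b_k| + \max B$, i.e.\ $\max A - \max B \le \max_{1 \le k \le N} |a_k - b_k|$.

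The roles of $A$ and $B$ are symmetric (the quantity $\max_{1 \le k \le N} |a_k - b_k|$ is unchanged under swapping $a_k$ and $b_k$), so the same argument yields $\max B - \max A \le \max_{1 \le k \le N} |a_k - b_k|$. Combining the two inequalities gives the conclusion.

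There is no serious obstacle here; the only thing to be careful about is the symmetry step, which rests on the fact that $|a_k - b_k| = |b_k - a_k|$. The proof is essentially a one-line argument plus its mirror image.
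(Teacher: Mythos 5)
Your proof is correct. It takes a mildly different route from the paper's: the paper picks maximizing indices $i_0$ and $j_0$ with $\max A = a_{i_0}$ and $\max B = b_{j_0}$ and then argues by cases on whether $a_{i_0}\ge b_{j_0}$, bounding $|a_{i_0}-b_{j_0}|$ by the difference at a single common index ($|a_{i_0}-b_{i_0}|$ or $|a_{j_0}-b_{j_0}|$ as appropriate); you instead prove the pointwise bound $a_k \le \max_j|a_j-b_j| + \max B$ for every $k$, take the maximum over $k$, and invoke symmetry in $A$ and $B$ for the reverse inequality. Both arguments are complete. Yours avoids the case split and does not need the maxima to be attained at identifiable indices, so it transfers verbatim to suprema over arbitrary index sets; the paper's is slightly more concrete in that it exhibits the single index at which the bound is realized. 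For the finite sets used here the two are interchangeable.
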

\begin{proof}
	Suppose $max\; A = a_{i_0}$ and $max\; B = b_{j_0},$ $1 \leq i_0,j_0 \leq N.$ Then,  $\left|max\; A - max\; B  \right| = |a_{i_0} - b_{j_0}|.$ If $a_{i_0} \geq b_{j_0},$ then $0 \leq a_{i_0} -b_{j_0} \leq a_{i_0} - b_{i_0}.$ Consequently, $|a_{i_0} -b_{j_0}|  \leq  \left| a_{i_0} -b_{i_0} \right| \leq \max\limits_{1 \leq k \leq N} | a_k - b_k|.$ On the other hand, if $a_{i_0} \leq b_{j_0},$ then $0 \leq b_{j_0} - a_{i_0} \leq b_{j_0} - a_{j_0}$ and so, $|a_{i_0} -b_{j_0}| = b_{j_0} - a_{i_0} \leq b_{j_0} - a_{j_0} = \left| a_{j_0} - b_{j_0} \right| \leq \max\limits_{1 \leq k \leq N} | a_k - b_k|,$ thereby proving the relation.
\end{proof}

\begin{lem}\label{lemma3point2}
	Let $F = \{f_i\}_{i=1}^N $ be a frame for $\mathcal{H}$ and $\mathcal{G}$ be the collection of all dual frames of $F$.  Let  $\{q_i\}_{i=1}^N $ be a weight number sequence and  $0\le \lambda\le 1.$ Then, the map $ \eta : \mathcal{G} \rightarrow \mathbb{R}^{+} \cup \{0\}\; \text{defined by}\; \eta(G) = \mathcal{A}_{\lambda P}^{(1)}(F,G)$ is  uniformly continuous.
\end{lem}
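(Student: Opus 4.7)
The plan is to establish Lipschitz continuity of $\eta$ (which gives uniform continuity immediately), with Lipschitz constant $L := \max_{1 \le i \le N} q_i \|f_i\|$. The starting point is Proposition \ref{prop3point1}, which expresses $\eta(G)$ as a finite maximum over $i$ of the quantity $\phi_i(G) := q_i\bigl(\lambda|\langle f_i, g_i\rangle| + (1-\lambda)\|f_i\|\,\|g_i\|\bigr)$. Given two duals $G = \{g_i\}_{i=1}^N$ and $G' = \{g_i'\}_{i=1}^N$ of $F$, Lemma \ref{lemma3point1} applied to $A = \{\phi_i(G)\}_{i=1}^N$ and $B = \{\phi_i(G')\}_{i=1}^N$ reduces the problem to estimating $|\phi_i(G) - \phi_i(G')|$ uniformly in $i$.

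For each fixed $i$, I would apply the triangle inequality to split
\[
|\phi_i(G) - \phi_i(G')| \;\le\; \lambda q_i\, \bigl||\langle f_i, g_i\rangle| - |\langle f_i, g_i'\rangle|\bigr| \;+\; (1-\lambda) q_i \|f_i\|\, \bigl|\|g_i\| - \|g_i'\|\bigr|.
\]
The reverse triangle inequality gives $\bigl||\langle f_i, g_i\rangle| - |\langle f_i, g_i'\rangle|\bigr| \le |\langle f_i, g_i - g_i'\rangle|$, and then Cauchy--Schwarz bounds this by $\|f_i\|\,\|g_i - g_i'\|$. Similarly, $\bigl|\|g_i\| - \|g_i'\|\bigr| \le \|g_i - g_i'\|$. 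Combining these, the convex combination in $\lambda$ collapses and one obtains the clean bound
\[
|\phi_i(G) - \phi_i(G')| \;\le\; q_i \|f_i\|\,\|g_i - g_i'\|.
\]

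Finally, since $\|g_i - g_i'\| \le \bigl(\sum_{j=1}^N \|g_j - g_j'\|^2\bigr)^{1/2} = \|G - G'\|$ in the chosen $2$-norm on $\mathcal{H}^N$, taking the maximum over $i$ and invoking Lemma \ref{lemma3point1} yields
\[
|\eta(G) - \eta(G')| \;\le\; \max_{1 \le i \le N}\, q_i \|f_i\|\cdot \|G - G'\| \;=\; L\,\|G - G'\|.
\]
Thus $\eta$ is Lipschitz, hence uniformly continuous, on $\mathcal{G}$. There is no real obstacle here: the argument is standard once Lemma \ref{lemma3point1} is in hand; the only thing to be careful about is the absolute-value inside the spectral-radius term, which is precisely what the reverse triangle inequality handles. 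I would also note for clarity that $L$ depends only on $F$ and the weight sequence $\{q_i\}$, so the modulus of continuity is uniform across all of $\mathcal{G}$.
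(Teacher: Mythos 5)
Your proposal is correct and follows essentially the same route as the paper: both apply Lemma \ref{lemma3point1} to the expression from Proposition \ref{prop3point1}, then use the reverse triangle inequality and Cauchy--Schwarz to obtain the Lipschitz bound with constant $\max_{1\le i\le N} q_i\|f_i\|$. No issues.
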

\begin{proof}
	Let $G'=\{g'_i\}_{i=1}^N,G''=\{g''_i\}_{i=1}^N \in \mathcal{G}.$ By Proposition \ref{prop3point1} and Lemma \ref{lemma3point1}, we have
	\begin{align*}
		|\eta(G') - \eta(G'') | &= \left|  \max_{1\leq i \leq N}\;\;q_i\bigg({\lambda | \langle f_i, g'_i  \rangle | + (1-\lambda)\|f_i\|\; \| g'_i \|}\bigg)  -  \max_{1\leq i \leq N}\;\;q_i\bigg({\lambda| \langle f_i, g''_i  \rangle | +(1-\lambda) \|f_i\|\; \| g''_i \|}\bigg) \right| \\& \leq \max_{1 \leq i \leq N} \left| q_i\bigg({\lambda | \langle f_i, g'_i  \rangle | + (1-\lambda)\|f_i\|\; \| g'_i \|}\bigg) - q_i\bigg({\lambda| \langle f_i, g''_i  \rangle | + (1-\lambda)\|f_i\|\; \| g''_i \|}\bigg)  \right| \\& \leq \max_{1 \leq i \leq N}\;\;q_i \bigg( \lambda \left| \;|\langle f_i, g'_i \rangle | -  |\langle f_i, g''_i \rangle | \; \right| + (1-\lambda ) \|f_i\| \left| \|g'_i\|-\|g''_i\| \right| \bigg) \\& \leq \max_{1 \leq i \leq N}\;\;q_i \bigg( \lambda |\langle f_i,g'_i - g''_i \rangle| + (1-\lambda)\|f_i\|\;\|g'_i - g''_i\|    \bigg) \\& \leq \max_{1 \leq i \leq N}\;\; q_i \|f_i\|\;\|g'_i - g''_i\| \\& \leq B \;\|G' -G''\|, \end{align*}
	where $B= \max\limits_{1 \leq i \leq N} q_i \|f_i\|.$ Hence, $\eta$ is a uniformly continuous function.
\end{proof}

\begin{thm}
	Let $F = \{f_i\}_{i=1}^N $ be a frame for $\mathcal{H}$ with $f_i \neq 0\,\forall i$.  Let  $\{q_i\}_{i=1}^N $ be a weight number sequence and $0\le \lambda < 1.$    Then, the set $\Delta_{F}^{(1)} $ is a nonempty compact subset of $\mathcal{H}^N$.
\end{thm}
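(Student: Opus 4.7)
The plan is to deduce compactness from the two previously established facts (closedness of $\Delta_F^{(1)}$ and continuity of the functional $\eta$) together with a boundedness argument that crucially uses the hypotheses $\lambda<1$ and $f_i\neq 0$. Since $\mathcal{H}^{N}$ is finite dimensional, compact amounts to closed and bounded, so the real work is (a) boundedness and (b) nonemptiness.

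First I would establish a uniform bound on $\|g_i\|$ for any dual frame $G=\{g_i\}_{i=1}^N$ whose $\eta$-value is close to $\mathcal{A}_{\lambda P}^{(1)}(F)$. By Proposition \ref{prop3point1}, for every $1\le i\le N$,
\begin{equation*}
(1-\lambda)\,q_i\,\|f_i\|\,\|g_i\|\;\le\;\mathcal{A}_{\lambda P}^{(1)}(F,G).
\end{equation*}
Since $\lambda<1$, $q_i>0$, and $\|f_i\|>0$, this gives
\begin{equation*}
\|g_i\|\;\le\;\frac{\mathcal{A}_{\lambda P}^{(1)}(F,G)}{(1-\lambda)\,q_i\,\|f_i\|}.
\end{equation*}
In particular, if $G\in\Delta_F^{(1)}$ then $\mathcal{A}_{\lambda P}^{(1)}(F,G)=\mathcal{A}_{\lambda P}^{(1)}(F)\le \mathcal{A}_{\lambda P}^{(1)}(F,S_F^{-1}F)$, so $\|G\|^{2}=\sum_{i=1}^{N}\|g_i\|^{2}$ is bounded above by an explicit constant depending only on $F$, $\lambda$, and $\{q_i\}$. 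This is the boundedness of $\Delta_F^{(1)}$.

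For nonemptiness, I would pick a minimizing sequence $\{G^{(n)}\}\subset\mathcal{G}$ with $\eta(G^{(n)})\to\mathcal{A}_{\lambda P}^{(1)}(F)$. For large $n$ we have $\eta(G^{(n)})\le\mathcal{A}_{\lambda P}^{(1)}(F)+1$, and the same estimate as above shows that $\{G^{(n)}\}$ is a bounded sequence in the finite-dimensional space $\mathcal{H}^{N}$. By Bolzano–Weierstrass, some subsequence $G^{(n_k)}$ converges to a limit $G^{*}\in\mathcal{H}^{N}$. The set of dual frames $\mathcal{G}$ is the affine translate $S_F^{-1}F+\ker\Theta_F^{*}$ (via the parametrization in \eqref{eqn2point2}), hence closed, so $G^{*}\in\mathcal{G}$. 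By the uniform continuity of $\eta$ proved in Lemma \ref{lemma3point2}, $\eta(G^{*})=\lim_{k\to\infty}\eta(G^{(n_k)})=\mathcal{A}_{\lambda P}^{(1)}(F)$, giving $G^{*}\in\Delta_F^{(1)}$. Combined with the closedness from the previous theorem and the boundedness just established, $\Delta_F^{(1)}$ is a nonempty compact subset of $\mathcal{H}^{N}$.

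The main obstacle is the boundedness step, which is precisely where the hypotheses $\lambda<1$ and $f_i\ne 0$ are indispensable. If $\lambda=1$ only the spectral radius term $\lambda q_i|\langle f_i,g_i\rangle|$ would control $\mathcal{A}_{\lambda P}^{(1)}(F,G)$, and this permits $\|g_i\|$ to grow arbitrarily along directions orthogonal to $f_i$; similarly if some $f_i=0$, the inequality above degenerates. Once boundedness is in hand, the rest is a standard minimizing-sequence/continuity argument and is essentially bookkeeping.
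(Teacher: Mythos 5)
Your proposal is correct and follows essentially the same route as the paper: both arguments hinge on the bound $(1-\lambda)\,q_i\,\|f_i\|\,\|g_i\|\le \mathcal{A}_{\lambda P}^{(1)}(F,G)$ (which is exactly where $\lambda<1$ and $f_i\neq 0$ enter), the closedness of the set of duals, and the continuity of $\eta$ from Lemma \ref{lemma3point2}. The only cosmetic difference is that you extract a convergent minimizing sequence via Bolzano--Weierstrass, whereas the paper shows the sublevel set $\eta^{-1}([0,a])$ with $a=\eta(S_F^{-1}F)$ is compact and attains the infimum there; these are interchangeable in finite dimensions.
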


\begin{proof}
	Let $\mathcal{G}$ denote the collection of all dual frames of $F$ in  $\mathcal{H}.$ Clearly, $\mathcal{G}$ is a closed subset of $\mathcal{H}^N.$  Consider the continuous map $\eta : \mathcal{G} \rightarrow \mathbb{R}^{+} \cup \{0\},$ defined as in Lemma \ref{lemma3point2}. Let $a := \eta\left( S_{F}^{-1}F  \right).$ We shall first show that $\eta^{-1}([0,a])$ is a compact subset of $\mathcal{H}^N.$ Clearly, $\eta^{-1}([0,a])$ is closed. Further, for any $G \in \eta^{-1}\left([0,a]\right),\,$ $\eta(G) \geq \max\limits_{1 \leq i \leq N} {q_i}(1-\lambda)\|f_i\|\,\|g_i\| \geq cd(1-\lambda) \max\limits_{1 \leq i \leq N}\|g_i\|,$ where $c= \min\limits_{1 \leq i \leq N} {q_i} $ and $d= \min\limits_{1 \leq i \leq N} {\|f_i\|}.$ So, $\eta(G) \geq \frac{cd(1-\lambda)}{N}\sum\limits_{i=1}^N \|g_i\| \geq \frac{cd(1-\lambda)}{N} \|G\|.$ This then implies that $\|G\| \leq \frac{Na}{cd(1-\lambda)},\,\forall G \in \eta^{-1}([0,a]),$ which proves that $\eta^{-1}([0,a])$ is compact. Therefore, there exists  $G' \in \eta^{-1}([0,a])$ such that $\eta(G') = b,$ where $b := \inf\limits_{G \in \eta^{-1}([0,a]) } \eta(G).$ In fact, it is true that $b = \inf\limits_{G \in \mathcal{G}} \eta(G) $ as well. Thus, $G' \in \Delta_{F}^{(1)},$ thereby proving that $\Delta_{F}^{(1)}$ is nonempty. Moreover, $\Delta_{F}^{(1)} = \eta^{-1}(b),$ which being a closed subset of the compact set $\eta^{-1}([0,a]),$ is also compact.
\end{proof}

\noindent
In the next theorem, we show that the image of a $1-$erasure $PA_{SO}OD$ of $F$ under a unitary operator is a $1-$erasure $PA_{SO}OD$ of the unitary image of $F.$
\begin{thm}
	Let $F = \{f_i\}_{i=1}^N $ be a frame for $\mathcal{H}$ and $U$ be a unitary operator on $\mathcal{H}.$  Let  $\{q_i\}_{i=1}^N $ be a weight number sequence and  $0\le \lambda\le 1.$  Then, $G \in \Delta_{F}^{(1)}$ if and only if $UG \in \Delta_{UF}^{(1)}.$
\end{thm}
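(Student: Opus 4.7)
The plan is to exploit the unitary invariance of the measure $\mathcal{A}_{\lambda P}^{(1)}(\cdot,\cdot)$, together with a bijection between the duals of $F$ and the duals of $UF$ induced by $U$. Throughout, I will treat $U$ as a unitary operator on $\mathcal{H}$, so that $U^{\ast}U = UU^{\ast} = I$, $\|Ux\|=\|x\|$ and $\langle Ux, Uy\rangle = \langle x, y\rangle$ for all $x,y\in\mathcal{H}$.

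First I would verify that $G=\{g_i\}_{i=1}^N$ is a dual frame of $F$ if and only if $UG=\{Ug_i\}_{i=1}^N$ is a dual frame of $UF=\{Uf_i\}_{i=1}^N$. For any $f\in\mathcal{H}$ one has
\[
\sum_{i=1}^N \langle f, Ug_i\rangle\, Uf_i \;=\; U\!\left(\sum_{i=1}^N \langle U^{\ast}f, g_i\rangle\, f_i\right) \;=\; U(U^{\ast}f)\;=\;f,
\]
using that $G$ is a dual of $F$; the reverse direction is obtained by applying $U^{\ast}$ in place of $U$. Consequently, the map $G\mapsto UG$ is a bijection from the set of all duals of $F$ onto the set of all duals of $UF$.

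Next, using Proposition \ref{prop3point1}, the unitary invariance of inner products and norms gives, for every dual $G$ of $F$,
\begin{align*}
\mathcal{A}_{\lambda P}^{(1)}(UF, UG) &= \max_{1\le i\le N} q_i\Big(\lambda|\langle Uf_i, Ug_i\rangle| + (1-\lambda)\|Uf_i\|\,\|Ug_i\|\Big)\\
&= \max_{1\le i\le N} q_i\Big(\lambda|\langle f_i, g_i\rangle| + (1-\lambda)\|f_i\|\,\|g_i\|\Big)\\
&= \mathcal{A}_{\lambda P}^{(1)}(F,G).
\end{align*}
Taking the infimum over all duals $G$ of $F$ on the left and using the bijection established above, I conclude that $\mathcal{A}_{\lambda P}^{(1)}(F)=\mathcal{A}_{\lambda P}^{(1)}(UF)$.

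Finally, $G\in\Delta_{F}^{(1)}$ means $\mathcal{A}_{\lambda P}^{(1)}(F,G)=\mathcal{A}_{\lambda P}^{(1)}(F)$. By the two equalities above this is equivalent to $\mathcal{A}_{\lambda P}^{(1)}(UF,UG)=\mathcal{A}_{\lambda P}^{(1)}(UF)$, i.e. $UG\in\Delta_{UF}^{(1)}$. This chain of equivalences gives the theorem. There is no real obstacle here; the only point to be careful about is the bijection step, ensuring that the infimum defining $\mathcal{A}_{\lambda P}^{(1)}(UF)$ is genuinely taken over the same family (now via $U$) as that defining $\mathcal{A}_{\lambda P}^{(1)}(F)$, which is exactly what the first step establishes.
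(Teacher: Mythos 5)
Your proof is correct and follows essentially the same route as the paper: both establish that $U$ carries duals of $F$ bijectively onto duals of $UF$ (the paper phrases this via $U^{*}G'$ being a dual of $F$ for any dual $G'$ of $UF$) and then invoke the unitary invariance of the quantity in Proposition \ref{prop3point1}. Your explicit statement that $\mathcal{A}_{\lambda P}^{(1)}(F)=\mathcal{A}_{\lambda P}^{(1)}(UF)$ is just a slightly cleaner packaging of the paper's inequality chain.
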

\begin{proof}
	If $G=\{g_i\}_{i=1}^N$ is a dual frame for the frame $F,$ then $UG =  \{Ug_i\}_{i=1}^N$ is a dual frame for the frame $UF = \{Uf_i\}_{i=1}^N.$  Suppose $G \in \Delta_{F}^{(1)}.$ Let $G'= \{g'_i\}_{i=1}^N$ be a dual of $UF$. Then, $U^{*}G'$ is a dual of $F.$ \\
	Now,
	\begin{align*}
		\mathcal{A}_{\lambda P}^{(1)}(UF,UG) &= \max_{1 \leq i \leq N}  {q_i \bigg( \lambda | \langle Uf_i,Ug_i\rangle | + (1-\lambda)\|Uf_i\|\; \|Ug_i \|\bigg)}  \\&= \max\limits_{1 \leq i \leq N} {q_i \bigg( \lambda | \langle f_i,g_i\rangle | + (1-\lambda)\|f_i\|\; \|g_i \|\bigg)}  \\ &\leq \mathcal{A}_{\lambda P}^{(1)}(F,U^{*}G') \\ &= \max_{1 \leq i \leq N}  {q_i \bigg(\lambda| \langle f_i,U^*g'_i\rangle | + (1-\lambda)\|f_i\|\; \|U^*g'_i \|\bigg)} \\&= \max_{1 \leq i \leq N} {q_i \bigg(\lambda| \langle U f_i,g'_i\rangle | + (1-\lambda)\|U f_i\|\; \|g'_i \|\bigg)} \\&= \mathcal{A}_{\lambda P}^{(1)}(UF,G')
	\end{align*}
	Hence, $UG \in \Delta_{UF}^{(1)}.$ By taking $UF,UG\; \text{and}\; U^*$ in the place of $F,G \; \text{and}\; U$ respectively in the above argument, we can conclude that $UG \in \Delta_{UF}^{(1)}$ implies that $G \in \Delta_{F}^{(1)}.$
\end{proof}

\subsection*{Acknowledgment} S. Arati acknowledges the financial support of National Board for Higher Mathematics, Department of Atomic Energy, Government of India.

\bibliographystyle{amsplain}


\end{document}